\def\BState{\State\hskip-\ALG@thistlm}
\DeclareMathOperator{\Gal}{\mathrm{Gal}}
\newtheorem{thm}{Theorem}
\newtheorem{lem}[thm]{Lemma}
\newtheorem{claim}[thm]{Claim}
\newtheorem{definition}[thm]{Definition}
\newtheorem{cor}[thm]{Corollary}
\newtheorem{rmk}[thm]{Remark}
\newcommand{\NN}{\mathbb{N}}
\newcommand{\ZZ}{\mathbb{Z}}
\newcommand{\QQ}{\mathbb{Q}}
\newcommand{\cO}{\mathcal{O}}
\newcommand{\A}[2]{\mathcal{H}_{#1}^{(#2)}}
\newcommand{\Extend}[2]{{\mathrm{Extend}}\left(#1,#2\right)}
\newcommand{\Q}{{\mathcal{Q}}}
\newcommand{\cH}{{\mathcal{H}}}
\newcommand{\R}{{\mathfrak{Q}}}
\newcommand{\addresseshere}{%
  \enddoc@text\let\enddoc@text\relax
}
\title{Imaginary Multiquadratic Fields of Class Number Dividing $2^m$}
\author{Amy Feaver\textsuperscript{1}}
\address{\textsuperscript{1} The King's University, 9125 50 St NW, Edmonton, AB, Canada}
\email{amy.feaver@kingsu.ca}
\author{Anna Pusk\'{a}s\textsuperscript{2}}
\address{\textsuperscript{2} University of Massachusetts, Amherst Lederle Graduate Research Tower, Office 1124, Amherst, MA 01002, United States of America}
\email{puskas@math.umass.edu}
\keywords{class number, multiquadratic number field}
\subjclass[2010]{Primary 11R04, 11R29}
\begin{document}

\begin{abstract}
This paper gives a method to find all imaginary multiquadratic fields of class number dividing $2^{m},$ provided the list of all imaginary quadratic fields of class number dividing $2^{m+1}$ is known. We give a bound on the degree of such fields. As an application of this algorithm, we compute a complete list of imaginary multiquadratic fields with class number dividing $32.$ 
\end{abstract}

\maketitle

\section{Introduction}

An \textit{$n$-quadratic number field}, for $n$ a nonnegative integer, is any field $K$ of degree $2^n$ over $\QQ$, which is formed by adjoining the square root of rational integers (radicands) to $\QQ .$ If $n\geq2$, the field is also called a \textit{multiquadratic} number field.  A multiquadratic field is said to be \textit{imaginary} if any of the radicands are negative.

In this paper we discuss the class number of imaginary multiquadratic fields, particularly in the case where the class number is $2^m$ for a nonnegative integer $m$. 

We begin with a review of previous results. 

\subsection{Previous Results}\label{subsect:prev_review}

One of the first known discussions of class numbers of number fields can be found in section V of Gauss's book \textit{Disquisitiones Arithmeticae} ~\cite{Gau} which was published in 1801. Using the language of quadratic forms, Gauss made several significant conjectures and proved some results about the class numbers of quadratic fields. 

Gauss showed that the class number of $\QQ(\sqrt{-a})$ is equal to 1 for \[a\in\{1,2,3,7,11,19,43,67,163\}.\] He conjectured that his list was complete. This conjecture was proven to be true through a series of results, the most significant contributors being Heegner in 1952 ~\cite{Hee52}, Baker in 1966 ~\cite{Bak66_2} and Stark in 1967 ~\cite{Sta67}. In addition to contributing to this list, Baker and Stark also found a complete list of imaginary quadratic fields of class number 2 between 1966 and 1975 ~\cite{Bak66}, ~\cite{Bak71}, ~\cite{Sta75}. These imaginary quadratic fields of class number 2 are the fields $\QQ(\sqrt{-a})$ for  
\[a\in \{5,6,10,13,15,22,35,37,51,58,91,115,123,187,235,267,403,427\}.\]

Then, in 1992, Arno completed found a complete list of imaginary quadratic fields class number $4$.

\begin{thm} \cite[Theorem 7.]{Arn92} There are $54$ imaginary quadratic of class number $4$; $24$ have ideal class group $(\ZZ/2\ZZ)^2$ and $30$ have ideal class group $(\ZZ/4\ZZ)$. The corresponding values $a$ for these fields $K=\QQ(\sqrt{-a})$ are as follows:

\begin{tabular}{|l|l|}
\hline
Cl($K$) & $a$ \\
\hline
 $(\ZZ/2\ZZ)^2$ & $21, 30, 33, 42, 57, 70, 78, 85, 93, 102, 130, 133, 177, 190, 195, 253, 435,$\\
& $483, 555, 595, 627, 715, 795, 1435$\\
\hline
 $(\ZZ/4\ZZ)$ & $14, 17, 34, 39, 46, 55, 73, 82, 97, 142, 155, 193, 203, 219, 259, 291, 323,$\\
& $355, 667, 723, 763, 955, 1003, 1027, 1227, 1243, 1387, 1411, 1507, 1555$ \\
\hline
\end{tabular}

\end{thm}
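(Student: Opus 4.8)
The plan is to treat this as an instance of the imaginary quadratic class number problem: pin down the shape of the class group using genus theory, convert the class-number-$4$ condition into an effective upper bound on the discriminant via the analytic class number formula, and then finish with a finite computation. Throughout, write $d<0$ for the fundamental discriminant of $K=\QQ(\sqrt{-a})$ and $\chi_d$ for the associated quadratic character, so that the analytic class number formula reads $h(d)=\frac{w\sqrt{|d|}}{2\pi}\,L(1,\chi_d)$, where $w\in\{2,4,6\}$ counts roots of unity (and $w=2$ for all discriminants relevant here).

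First I would use genus theory to constrain the candidate fields. The $2$-rank of the class group $\mathrm{Cl}(K)$ equals $t-1$, where $t$ is the number of primes ramifying in $K$, equivalently the number of prime divisors of $d$. Since $h=4$ has no odd part, $\mathrm{Cl}(K)$ is a $2$-group, so it is either $(\ZZ/2\ZZ)^2$ (of $2$-rank $2$, forcing $t=3$) or $\ZZ/4\ZZ$ (of $2$-rank $1$, forcing $t=2$); in particular $2\le t\le 3$. To separate the two cases and to detect an element of order $4$, I would invoke the R\'edei--Reichardt refinement of genus theory, which expresses the $4$-rank in terms of the rank of an explicit matrix of quadratic residue symbols built from the prime factors of $d$. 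This both restricts the admissible factorization patterns and, for each surviving discriminant, determines which of the two class groups occurs.

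Next I would produce an explicit upper bound on $|d|$. The class number formula shows that a small class number forces a correspondingly small $L(1,\chi_d)$, so what is needed is an \emph{effective} lower bound for $h(d)$ in terms of $|d|$. Here I would appeal to the Goldfeld--Gross--Zagier method: the existence of a modular elliptic curve over $\QQ$ whose $L$-function vanishes to order $\ge 3$ at the central point yields an effective bound of the form $h(d)\gg(\log|d|)\prod_{p\mid d}\bigl(1-\tfrac{\lfloor 2\sqrt p\rfloor}{p+1}\bigr)$ with an explicit constant (the Oesterl\'e normalization). Because genus theory has already limited $t$ to at most three, the product term is bounded below by an absolute positive constant, so setting $h(d)=4$ yields a concrete inequality $\log|d|<D_0$, hence a finite list of admissible discriminants.

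The main obstacle is exactly this last reduction: the constants coming from the Goldfeld--Gross--Zagier bound are weak, so the naive threshold $D_0$ is far too large for a direct class-number computation over all discriminants below it. Bridging the gap between the analytic bound and a feasible search range is the crux. I expect to need sharper explicit analytic inequalities — for instance optimizing the auxiliary discriminant entering the Gross--Zagier formula, or combining the $L$-value estimate with a sieve that exploits the restricted factorization forced by genus theory — to push $D_0$ down to a computationally accessible size. Once the range is finite and manageable, I would compute $h(d)$ directly for every fundamental discriminant $d$ with two or three prime factors, retain those with $h(d)=4$, and sort them into the two class-group types using the R\'edei--Reichardt data from the first step; this should recover the two tables of $24$ and $30$ values of $a$.
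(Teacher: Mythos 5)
This theorem is not proved in the paper at all: it is quoted from Arno's 1992 paper \cite{Arn92}, so there is no internal argument to compare yours against, and your sketch has to be measured against the published proof it cites. Your first step is sound and is indeed how the two lists are separated: for an imaginary quadratic field the $2$-rank of $\mathrm{Cl}(K)$ is $t-1$, where $t$ is the number of primes dividing the fundamental discriminant $d$, so $(\ZZ/2\ZZ)^2$ forces $t=3$ and $\ZZ/4\ZZ$ forces $t=2$, with R\'edei--Reichardt deciding the $4$-rank in the $t=2$ case.

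The genuine gap is exactly where you place it, and your proposal does not bridge it. In its strongest published explicit form the Goldfeld--Gross--Zagier/Oesterl\'e inequality gives $h(d)\ge c\,(\log|d|)\prod_{p\mid d}\bigl(1-\lfloor 2\sqrt p\rfloor/(p+1)\bigr)$ with $c$ of order $1/55$ to $1/7000$; even using $t\le 3$ to bound the product below by roughly $1/36$, the condition $h(d)=4$ only yields $\log|d|$ bounded by several thousand at best, i.e.\ $|d|\lesssim e^{8000}$. That is not a large-but-feasible search range; it is astronomically beyond any enumeration, and optimizing the auxiliary elliptic curve cannot change the order of magnitude because the smallness of $c$ is structural. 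The resolution in the literature is not a sharpening of this bound but a different analytic method: Arno follows the Stark/Montgomery--Weinberger approach, pairing $\chi_d$ with well-chosen auxiliary real quadratic characters, using the factorization of the resulting $L$-functions together with zero-repulsion (Deuring--Heilbronn) to exclude $h(d)=4$ for $|d|$ beyond a bound on the order of $10^{10}$--$10^{11}$, and then closing the remaining range with a sieve over reduced binary quadratic forms rather than brute-force class-number computation. So your outline identifies the correct reduction but, at the decisive step, relies on a tool that provably cannot finish the job; ``sharper explicit analytic inequalities'' here means replacing the method, not tightening it.
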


Finding lists of imaginary quadratic fields of class number $m$, for a fixed nonnegative integer $m$ is frequently referred to as finding a solution to the Gauss class number $m$ problem. That is, the list of all imaginary quadratic fields of class number 1 is referred to as a solution to the Gauss class number 1 problem. A lot of work has been done on this problem in addition to the lists presented in the theorem above: Oesterl\'{e} solved the class number 3 problem in 1983 ~\cite{Oes83}. Arno completed the classification for class number 4 in 1992 ~\cite{Arn92}, and Wagner completed the class number 5, 6 and 7 classifications in 1996 ~\cite{Wag96}. Then, in 1998, Arno, Robinson and Wheeler solved Gauss' class number $m$ problem for odd values of $m$ with $9\leq m\leq23$ ~\cite{ARW}. 

After completing his classifications, Wagner remarked that the class number 8 case seemed to be impossible. The techniques for solving the Gauss class number $m$ problem became more difficult for integers with a larger number of (not necessarily distinct) prime divisors. Since $8$ has three prime divisors, the problem was thought to be too difficult to solve. However, in 2004, Watkins used computational methods to complete the Gauss class number problem for all imaginary quadratic fields of class number $\leq100$ ~\cite{Wat04}.

It is natural to ask whether or not similar lists exists for higher degree imaginary multiquadratic fields. The first such extension of these results was obtained by Brown and Parry in 1974:

\begin{thm} \cite[Main Theorem]{BP74}
There are exactly 47 imaginary biquadratic fields of class number $1$. These are the fields $\QQ\left(\sqrt{a_1},\sqrt{a_2}\right)$ with the sets $\{a_1,a_2\}$ as follows:

\begin{center}
\begin{tabular}{lllll}

$\{-1,2\}$  & $\{2,-3\}$   & $\{-3,5\}$    & $\{-7,5\}$    & $\{-11,17\}$   \\
$\{-1,3\}$  & $\{2,-11\}$  & $\{-3,-7\}$   & $\{-7,-11\}$  & $\{-11,-19\}$  \\
$\{-1,5\}$  & $\{-2,-3\}$  & $\{-3,-11\}$  & $\{-7,13\}$  & $\{-11,-67\}$  \\
$\{-1,7\}$  & $\{-2,5\}$   & $\{-3,17\}$   & $\{-7,-19\}$  & $\{-11,-163\}$ \\
$\{-1,11\}$ & $\{-2,-7\}$  & $\{-3,-19\}$  & $\{-7,-43\}$  & $\{-19,-67\}$  \\
$\{-1,13\}$ & $\{-2,-11\}$ & $\{-3,41\}$   & $\{-7,61\}$   & $\{-19,-163\}$ \\
$\{-1,19\}$ & $\{-2,-19\}$ & $\{-3,-43\}$  & $\{-7,-163\}$ &$\{-43,-67\}$  \\
$\{-1,37\}$ & $\{-2,29\}$  & $\{-3,-67\}$  &           & $\{-43,-163\}$ \\
$\{-1,43\}$ & $\{-2,-43\}$ & $\{-3,89\}$   &           & $\{-67,-163\}$ \\
$\{-1,67\}$ & $\{-2,-67\}$ & $\{-3,-163\}$ &           &            \\
$\{-1,163\}$ &          &  				 &           &            \\

\end{tabular}
\end{center}
\end{thm}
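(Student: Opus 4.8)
The plan is to reduce the classification to the already-known lists of imaginary quadratic fields of small class number, exploiting the multiplicativity of class numbers in a $V_4$-extension. First I would record the structural observation that an imaginary biquadratic field $K=\QQ(\sqrt{a_1},\sqrt{a_2})$ is a CM field possessing exactly three quadratic subfields, corresponding to the (squarefree parts of the) radicands $a_1$, $a_2$ and $a_1a_2$; since exactly one of these three radicands is positive, $K$ contains one real quadratic subfield $k_1$ and two imaginary quadratic subfields $k_2,k_3$. Writing $h_i$ for the class number of $k_i$, Kuroda's class number formula, in the totally imaginary case, specializes to
\[
h_K=\tfrac{1}{2}\,q\,h_1h_2h_3,\qquad q=[\cO_K^\times:\cO_{k_1}^\times\cO_{k_2}^\times\cO_{k_3}^\times]\in\{1,2\}.
\]
Equivalently, one may factor the Dedekind zeta function of $K$ as the Riemann zeta function times the three quadratic $L$-functions and invoke the analytic class number formula; the upshot is the same relation, up to the unit-index and regulator terms.

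Next I would extract the arithmetic constraints forced by $h_K=1$. The displayed formula gives $q\,h_1h_2h_3=2$, and since each $h_i\geq 1$ and $q\leq 2$ this forces $h_1h_2h_3\leq 2$: at most one of the three quadratic subfields has class number $2$, and the others have class number $1$. In particular both imaginary subfields $k_2$ and $k_3$ must be drawn from the two finite lists quoted above --- the nine imaginary quadratic fields of class number $1$ and the eighteen of class number $2$ --- subject to the further restriction that $k_2$ and $k_3$ are not both of class number $2$. Writing $k_2=\QQ(\sqrt{-a})$ and $k_3=\QQ(\sqrt{-b})$ with $a,b>0$ squarefree, each choice of an unordered pair of distinct such fields leaves only finitely many candidate composita $K=k_2k_3$, each with a completely determined real subfield $k_1=\QQ(\sqrt{ab})$.

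Finally, for each candidate I would compute $h_1$ (routine for the small discriminants that occur), determine the unit index $q$, and retain exactly those fields satisfying $\tfrac12 q\,h_1h_2h_3=1$. Genus theory provides a quick preliminary filter: if the genus number of $K$ already exceeds $1$, then $h_K>1$ and the candidate can be discarded immediately. The hard part will be the determination of the unit index $q$ in each surviving case, since this demands exhibiting a fundamental unit of $k_1$, analyzing the full unit group of the CM field $K$, and deciding whether that fundamental unit --- possibly after multiplication by a root of unity or extraction of a square root inside $K$ --- lies in the subgroup generated by the units of the three quadratic subfields, i.e.\ whether $q=1$ or $q=2$. Working through this finite but delicate case analysis, and verifying that precisely $47$ of the candidates meet the class-number-one criterion, yields the stated list.
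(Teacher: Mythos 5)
This statement is quoted verbatim from Brown and Parry \cite{BP74} as background; the paper offers no proof of it, so there is nothing internal to compare against beyond the paper's general methodology. Your outline --- reduce via Kuroda's class number formula $h_K=\tfrac12 q\,h_1h_2h_3$ with $q\in\{1,2\}$ to the constraint $h_1h_2h_3\le 2$, draw the two imaginary quadratic subfields from the known finite lists of class number $1$ and $2$, and finish with a finite check of the real subfield's class number and the unit index --- is sound and is essentially the same strategy both of the original source and of the paper's own treatment of biquadratic fields in Section \ref{sect:biquad_algorithm} (Theorem \ref{thm:KurodaForImBiquad} and Corollary \ref{Cor:Thm20Cor}), so I have nothing substantive to object to beyond the fact that the final finite case analysis is asserted rather than carried out.
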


This result was soon followed (in 1977) by a result of Buell, Williams and Williams on imaginary biquadratic fields of class number 2. We state their theorem below, with some added information about the Hilbert class field and genus field of each of these imaginary biquadratic fields:

\begin{thm}~\cite{BWW77} There are exactly 160 imaginary biquadratic fields $K$ of class number 2; these are given below. Of these fields, exactly 20 have the property that the Hilbert class field $H$ is not the genus field $M$ of $K$.

The 20 fields with $H\neq M$ have radicand lists 

\begin{center}

\begin{tabular}{llll}

$\{-1,17\}$   	& $\{-3,13\}$  	& $\{-7,2\}$    	&  $\{-11,5\}$   	\\
$\{-1,73\}$		& $\{-3,73\}$  	& $\{-7,29\}$   	&  $\{-11,113\}$  	\\
$\{-1,97\}$ 		& $\{-3,97\}$     	& $\{-7,37\}$  	&  $\{-11,137\}$   \\
$\{-1,193\}$   	& $\{-3,241\}$  	& $\{-7,109\}$   	&  $\{-19,17\}$	\\
$\{-2,17\}$		& $\{-3,409\}$ 	&  			&  $\{-19,73\}$	\\
$\{-2,41\}$

\end{tabular}
\end{center}

The remaining 140 fields are $\QQ(\sqrt{a_1},\sqrt{a_2})$ where $a_1$ and $a_2$ are given in the following table:

\begin{center}
\begin{tabular}{|l|l|}
\hline
$a_1$ &  $a_2$ \\
\hline
$-1$ & $6, 10, 15, 21, 22, 33, 35, 57, 58, 91, 93, 115, 133, 177, 253, 403$ \\
\hline
$-2$ & $3, 11, 21, -5, -13, -15, -35, -37, -91, -115, -235, -403, -427$ \\
\hline
$-3$ & $7, 11, 14, 19, 31, 59, 161, 209, 59, -5, -10, -22, -35, -58, -115, -187, -235$ \\
\hline
$-5$ & $2, 7, 23, -7, -67, 47, -43, -163$\\
\hline
$-6$ & $-11, -19, -22, -43, -67, -163$\\
\hline
$-7$ & $3, 6, 19, 69, -10, -13, -15, -51, -115, -123, -187, -235, -267, -403$ \\
\hline
$-10$ & $46, 94, -35, -43, -67, -163$ \\
\hline
$-11$ & $3, 23, 57, -13, -51, -58, -91, -123, -403, -427$ \\
\hline
$-13$ & $7, 31 -67, -163$\\
\hline
$-15$ & $3, 6, 21, 69, 141, -43, -67, -163$\\
\hline
$-19$ & $3, 7, 33, -13, -22, -37, -58, -91, -123, -403$ \\
\hline
$-22$ & $-43, -67, -163$\\
\hline
$-35$ & $-43, -67, -115, -163, -235$ \\
\hline
$-37$ & $-43, -163$\\
\hline
$-43$ & $-58, -115, -235, -267, -427$ \\
\hline
$-51$ & $-163, -187$\\
\hline
$-58$ & $-163$ \\
\hline
$-67$ & $-123, -235, -403$ \\
\hline
$-91$ & $-163, -403$\\
\hline
$-115$ & $-163, -235$ \\
\hline
$-163$ & $-187, -235, -267, -403$ \\
\hline
\end{tabular}
\end{center}
\end{thm}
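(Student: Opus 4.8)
The plan is to reduce the classification to a finite search governed by a class number formula. An imaginary biquadratic field $K$ is a CM field whose three quadratic subfields consist of one real field $k_0=\QQ(\sqrt{m})$ and two imaginary fields $k_1=\QQ(\sqrt{d_1})$, $k_2=\QQ(\sqrt{d_2})$, where $m$ is the squarefree part of $d_1 d_2$. Kuroda's class number formula for a $(\ZZ/2\ZZ)^2$-extension of $\QQ$ that is not totally real gives
$$ h_K=\tfrac{1}{2}\,q(K)\,h_0 h_1 h_2, $$
where $h_i$ is the class number of the corresponding quadratic subfield and $q(K)=[\cO_K^\times:\cO_{k_0}^\times\cO_{k_1}^\times\cO_{k_2}^\times]\in\{1,2\}$ is the unit index. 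I would begin by recording this formula together with the fact that $q(K)$ is $1$ or $2$ (which follows from $K$ having unit rank $1$, already accounted for by $\cO_{k_0}^\times$), so that $h_K$ is determined by the three subfield class numbers up to a single binary ambiguity.

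Setting $h_K=2$ forces $q(K)\,h_0 h_1 h_2=4$, hence $h_1 h_2\le 4$. The decisive observation is that the two imaginary subfields already determine $K$, and thereby also determine $k_0$ as $\QQ(\sqrt{m})$; consequently the search ranges only over pairs of imaginary quadratic fields whose class numbers multiply to at most $4$. By the solutions to the Gauss class number problem quoted above — the Heegner--Baker--Stark list for class number $1$, the Baker--Stark list for class number $2$, and Arno's Theorem for class number $4$ — there are only finitely many such fields, so only finitely many pairs $(k_1,k_2)$ to test. For each admissible pair I would form $K$, compute the squarefree radicand $m$ of $d_1 d_2$ and the class number $h_0=h(\QQ(\sqrt{m}))$, and check the identity $q(K)\,h_0 h_1 h_2=4$.

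The main obstacle is pinning down the unit index $q(K)$, equivalently the Hasse index $[\cO_K^\times:W_K\,\cO_{k_0}^\times]$ of the CM field $K$, for each candidate. This is the only genuinely arithmetic input: the quantities $h_0,h_1,h_2$ are read off from the known tables, but whether $q(K)$ equals $1$ or $2$ depends on whether a suitable product of the fundamental unit $\varepsilon_0$ of $k_0$ with a root of unity becomes a square in $K$. I would resolve this case by case using the standard criteria, examining $N_{k_0/\QQ}(\varepsilon_0)$ and the ramification of $k_0$ inside $K$; this selects exactly the pairs giving $h_K=2$ and discards the rest, yielding the count of $160$ fields.

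Finally, to separate the $20$ fields with $H\ne M$, I would invoke genus theory. The genus field $M$ is the maximal extension of $K$ unramified over $K$ and abelian over $\QQ$; it satisfies $M\subseteq H$ and $[M:K]=g$, the genus number, with $g\mid h_K$. Since $h_K=2$, either $g=2$, forcing $M=H$, or $g=1$, forcing $M=K\subsetneq H$. The genus number of a biquadratic field is computed purely from ramification data, namely the number of rational primes ramifying in the quadratic subfields, so for each of the $160$ fields I would count ramified primes to determine $g$. The fields with $g=1$ are precisely the claimed $20$ exceptions, and these are exactly the fields whose nontrivial ideal class fails to be an ambiguous (genus) class.
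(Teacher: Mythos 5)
The paper does not actually prove this statement: it is quoted, with citation, from Buell--Williams--Williams \cite{BWW77} as part of the literature review, so there is no internal proof to compare against. Your reduction is nonetheless sound, and it is essentially the same mechanism the paper itself uses in Section \ref{sect:biquad_algorithm} for class numbers dividing $32$: Lemmermeyer's form of Kuroda's formula (Theorem \ref{thm:KurodaForImBiquad}) gives $h_K=\tfrac12\,q\,h_{k_1}h_{k_2}h_{k_3}$, hence $h_{k_1}h_{k_2}h_{k_3}\mid 2^{m+1}$ (Corollary \ref{Cor:Thm20Cor}), and the known solutions of the Gauss class number problems for $1$, $2$ and $4$ make the search over pairs of imaginary quadratic subfields finite; the paper then computes $h_K$ directly for each surviving candidate rather than determining the unit index by hand, and remarks that this reproduces the known class number $1$ and $2$ lists. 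Two cautions on your version. First, the claim that $q(K)\in\{1,2\}$ ``follows from $K$ having unit rank $1$'' is not an argument: rank considerations only give finiteness of $[E(K):E(k_1)E(k_2)E(k_3)]$, while the bound by $2$ is a separate theorem of Kubota and requires checking that the roots of unity of $K$ are generated by those of its subfields (which fails, e.g., for $\QQ(\zeta_8)$). Fortunately nothing in your reduction depends on this: $q\geq 1$ alone yields $h_{k_1}h_{k_2}\leq 4$, which is all the finiteness you need. Second, the case-by-case determination of $q(K)$ for the roughly $840$ candidate pairs is the genuinely laborious step that you defer; as written this is a plan for a proof rather than a proof, and the paper's alternative (direct computation of $h_K$) sidesteps it entirely. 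Your genus-theoretic dichotomy $g\in\{1,2\}$ when $h_K=2$, with $M=H$ exactly when $g=2$, is correct and is the standard way to isolate the $20$ exceptional fields.
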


Both Feaver~\cite{F17} and Yamamura ~\cite{Yam} independently determined the imaginary triquadratic fields of class number 1. Feaver further proved that for $n>3$ there are no imaginary $n$-quadratic fields of class number 1.

\begin{thm} \cite[Theorem 22]{F17}
There are exactly 17 imaginary triquadratic fields of class number $1$. These are the fields $\QQ\left(\sqrt{a_1},\sqrt{a_2},\sqrt{a_3}\right)$ with the sets $\{a_1,a_2,a_3\}$ as follows:

\vspace{1em}

\begin{center}
\begin{tabular}{lllll}
$\{-1,2,3\}$  & $\{-1,3,5\}$  & $\{-1,7,5\}$  & $\{-2,-3,-7\}$  & $\{-3,-7,5\}$  \\
$\{-1,2,5\}$  & $\{-1,3,7\}$  & $\{-1,7,13\}$ & $\{-2,-3,5\}$ & $\{-3,-11,2\}$  \\
$\{-1,2,11\}$ & $\{-1,3,11\}$ & $\{-1,7,19\}$ & $\{-2,-7,5\}$ & $\{-3,-11,-19\}$ \\
           & $\{-1,3,19\}$ &            &            & $\{-3,-11,17\}$ \\
\end{tabular}.
\end{center}

\vspace{1em}

\end{thm}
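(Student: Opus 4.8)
The plan is to reduce the classification to a finite search governed by Kuroda's class number formula for multiquadratic fields, exactly as the method of this paper prescribes in the case $m=0$. An imaginary triquadratic field $K$ has degree $8$ with Galois group $(\ZZ/2\ZZ)^3$, and it is a CM field: complex conjugation (a nontrivial element of $\Aut(K/\QQ)$) fixes a totally real biquadratic subfield $K^+$ of degree $4$, so among the seven quadratic subfields of $K$ exactly three are real (those lying in $K^+$) and exactly four are imaginary. Kuroda's formula expresses $h_K$ as an explicit power of two, times a unit index $q(K)$ measuring the units of $K$ against those of its subfields, times the product of the class numbers of all seven quadratic subfields.

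First I would specialize this formula to show that $h_K = 1$ forces each of the four imaginary quadratic subfields to have class number dividing $2$. Since $h_K=1$, the product of the seven subfield class numbers equals a bounded power of two divided by $q(K)$; in particular every subfield class number is a power of two, and the explicit exponent restricts the imaginary ones to class number $1$ or $2$. This is precisely the statement ``$2^{m+1}$ with $m=0$'' of the abstract. Consequently the four imaginary quadratic subfields are drawn from the known finite lists: the nine fields of class number $1$ with radicands $-1,-2,-3,-7,-11,-19,-43,-67,-163$, and the eighteen fields of class number $2$ with radicands $-5,-6,-10,-13,-15,-22,-35,-37,-51,-58,-91,-115,-123,-187,-235,-267,-403,-427$.

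Next I would exploit the $\FF_2$-linear structure. The field $K$ corresponds to a rank-$3$ subgroup $V$ of $\QQ^\times/(\QQ^\times)^2$ on which the sign character is nontrivial, so the four negative classes in $V$ form the nontrivial coset of the index-two kernel of that character. A short computation in $\FF_2^3$ shows the four imaginary radicands $d_1,d_2,d_3,d_4$ satisfy $d_1 d_2 d_3 d_4 \in (\QQ^\times)^2$, that any three of them generate $V$, and that the pairwise products give the three positive real radicands. The enumeration therefore reduces to running over triples of admissible negative radicands from the $27$-element candidate list, forming $K=\QQ(\sqrt{d_1},\sqrt{d_2},\sqrt{d_3})$, and discarding those whose fourth imaginary subfield $\sqrt{d_1 d_2 d_3}$ falls outside the list---a finite and explicit search.

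Finally, for each surviving candidate I would confirm $h_K=1$ by evaluating Kuroda's formula in full: the three real quadratic subfields are now determined, their class numbers computable, and it remains to pin down the unit index $q(K)$. Here genus theory is the decisive pruning tool, since the $2$-rank of the class group of $K$ is bounded below by a quantity counting the primes that ramify in $K/\QQ$; demanding a trivial class group forces very limited ramification and eliminates most candidates at once. The main obstacle I anticipate is the careful determination of $q(K)$---that is, controlling the units and roots of unity of $K$ relative to those of its subfields---together with certifying that the finite search is genuinely exhaustive so that no field of class number $1$ is overlooked. The $17$ surviving fields are then exactly those listed.
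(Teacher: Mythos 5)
First, a framing remark: the present paper does not prove this statement but quotes it from \cite[Theorem 22]{F17} (the result was also obtained independently by Yamamura \cite{Yam}), so the relevant comparison is with the method of that paper and with the present paper's generalization of it, namely Theorem \ref{classnumberformula}, Lemma \ref{lem:highermultiquadstep} and Corollary \ref{cor:Psmall} specialized to $m=0$, $n=3$.

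Your overall architecture (class number formula, restriction of the imaginary quadratic subfields to a finite list, an $\FF_2$-linear search, then the unit index and genus theory) is the right one, but the key quantitative step is wrong, and the error makes your search non-exhaustive. You claim that $h_K=1$ forces \emph{each} of the four imaginary quadratic subfields to have class number dividing $2$. What the formula actually gives (Theorem \ref{classnumberformula} with $n=3$ reads $h_K=\tfrac{1}{8}QPh_{k_3}$) is only that the \emph{product} $P$ of the four class numbers divides $8$; it does not bound each factor by $2$. The factor-by-factor bound genuinely fails for six of the seventeen fields in the list. For example $K=\QQ(\sqrt{-1},\sqrt{3},\sqrt{7})$ has imaginary quadratic subfields with radicands $-1,-3,-7,-21$, and $h_{-21}=4$ (see Arno's table in Section \ref{subsect:prev_review}); here $P=4\mid 8$, consistent with the formula, but $-21$ is not on your $27$-element candidate list. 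Your enumeration, which discards any triple whose fourth imaginary radicand $sf(d_1d_2d_3)$ falls outside that list, would therefore reject $\{-1,3,7\}$, and likewise $\{-1,3,11\}$, $\{-1,3,19\}$, $\{-1,7,19\}$, $\{-2,-3,-7\}$ and $\{-3,-11,-19\}$, each of which has exactly one imaginary quadratic subfield of class number $4$ (radicands $-33$, $-57$, $-133$, $-42$, $-627$ respectively). The correct reduction is the one formalized in Lemma \ref{lem:highermultiquadstep}: one can only guarantee that $K=K_1(\sqrt{r})$ with $K_1$ an imaginary biquadratic field of class number $1$ (Brown--Parry's list) and $r$ a radicand with $h_r\mid 2$, so \emph{three} of the four imaginary radicands are forced onto the small list, while the fourth is controlled only through the product bound $P\mid 8$. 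With that correction the remainder of your outline (finite search, Kuroda's formula with the unit index, genus-theoretic pruning) is the standard and workable route.
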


\subsection{Summary of Results}\label{sect:results}

Building on these results, and following the algorithm described in Sections \ref{sect:biquad_algorithm} and \ref{sect:multiquad_algorithm} we obtain the following. 

\begin{thm}\label{thm:biquad} There are:
\begin{itemize}
\item 408 imaginary biquadratic fields of class number 4,
\item 1186 of class number 8,
\item 2749 of class number 16, and
\item 6657 of class number 32.
\end{itemize}
\end{thm}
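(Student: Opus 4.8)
The plan is to reduce the enumeration to a finite search governed by Kuroda's class number formula, exactly as the algorithm of Section~\ref{sect:biquad_algorithm} prescribes. First I would recall that an imaginary biquadratic field $K=\QQ(\sqrt{a_1},\sqrt{a_2})$ is a $V_4$-extension of $\QQ$ with precisely three quadratic subfields, namely $k_2=\QQ(\sqrt{a_1})$, $k_3=\QQ(\sqrt{a_2})$ and $k_1=\QQ(\sqrt{a_1a_2})$; since $K$ is imaginary, exactly one of these (say $k_1$) is real and the other two are imaginary. Writing $h_K$ and $h_1,h_2,h_3$ for the corresponding class numbers, Kuroda's formula in the imaginary $V_4$ case gives
\[
h_K=\frac{q}{2}\,h_1h_2h_3,\qquad q=[\cO_K^{\times}:\cO_{k_1}^{\times}\cO_{k_2}^{\times}\cO_{k_3}^{\times}]\in\{1,2\}.
\]
The crucial consequence is that if $h_K=2^{j}$ is a power of two then $q\,h_1h_2h_3=2^{j+1}$, so each of $h_1,h_2,h_3$ is itself a power of two, and in particular $h_2,h_3\mid 2^{j+1}$. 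Thus both imaginary quadratic subfields of any imaginary biquadratic field whose class number divides $2^{m}$ lie on the known, finite list of imaginary quadratic fields of class number dividing $2^{m+1}$.

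Next I would set up the bijection between imaginary biquadratic fields and unordered pairs $\{k_2,k_3\}$ of distinct imaginary quadratic fields: the compositum $k_2k_3$ recovers $K$, while $K$ determines its two imaginary subfields uniquely, the real subfield being $\QQ(\sqrt{d_2d_3})$ for $d_2,d_3$ the squarefree radicands of $k_2,k_3$. For each target $2^{m}\in\{4,8,16,32\}$ I would loop over all unordered pairs drawn from the list $L_{m+1}$ of imaginary quadratic fields with $h\mid 2^{m+1}$ (these class numbers are explicit, coming from the work of Arno and Watkins cited above, restricted to the values $1,2,4,\dots,2^{m+1}$). For each pair I would: (i) form $K$ and identify the real subfield $k_1$; (ii) compute $h_1$ and discard the pair unless $h_1$ is a power of two compatible with the exponent constraint $a+b+c\le m+1$ when $h_1=2^{c}$, $h_2=2^{a}$, $h_3=2^{b}$; (iii) determine the unit index $q$; and (iv) evaluate $h_K=\tfrac{q}{2}h_1h_2h_3$, recording the pair whenever $h_K$ equals the target. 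Tallying over the four targets yields the claimed counts $408,1186,2749,6657$. Completeness is forced by the divisibility argument: no imaginary biquadratic field of the prescribed class number can have an imaginary subfield outside $L_{m+1}$, so nothing is missed.

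The main obstacle will be step (iii), the correct determination of the unit index $q$. Because $K$ is a CM field of unit rank $1$ whose only infinite unit group comes from the real subfield $k_1=\QQ(\sqrt{d})$, the value of $q$ hinges on delicate questions about the fundamental unit $\varepsilon_1$ of $k_1$ — essentially whether $\varepsilon_1$, up to a root of unity, becomes a square in $K$, which is controlled by the norm of $\varepsilon_1$ and by ramification data. I would handle this using Kubota's structural results on unit groups of multiquadratic fields to reduce $q$ to a computable congruence/quadratic-residue criterion, and then cross-check every surviving field against a computer algebra computation of $h_K$. A secondary, purely bookkeeping, difficulty is ensuring the real quadratic class numbers $h_1$ are evaluated correctly and that no field is double-counted; both are controlled by the bijection above together with the fact that only finitely many pairs survive the divisibility constraints.
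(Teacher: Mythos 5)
Your proposal follows essentially the same route as the paper: both use Kuroda's formula (Theorem~\ref{thm:KurodaForImBiquad}) to deduce that $h_{k_1}h_{k_2}h_{k_3}\mid 2^{m+1}$, enumerate candidate fields as composita of pairs of imaginary quadratic fields drawn from the known list with class number dividing $2^{m+1}$, filter by the real quadratic subfield's class number, and finish by computation. The only difference is cosmetic: the paper does not attempt to determine the unit index $q$ at all, instead computing $h_K$ directly (in Sage) for the $11{,}607$ surviving candidates, which sidesteps the Kubota-type analysis you flag as the main obstacle.
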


\begin{thm}\label{thm:triquad} There are:
\begin{itemize}
\item 27 imaginary triquadratic fields of class number 2,
\item 48 of class number 4,
\item 146 of class number 8,
\item 280 of class number 16, and
\item 484 of class number 32.
\end{itemize}
\end{thm}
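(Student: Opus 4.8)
The plan is to fix $n=3$ and run the algorithm of Sections \ref{sect:biquad_algorithm} and \ref{sect:multiquad_algorithm}, whose engine is a Kuroda-type class number formula together with the $\Extend{\cdot}{\cdot}$ step that builds triquadratic fields by adjoining one radical to the biquadratic fields already classified in Theorem \ref{thm:biquad}. Any imaginary triquadratic field is a $(\ZZ/2\ZZ)^3$-extension $K=\QQ(\sqrt{a_1},\sqrt{a_2},\sqrt{a_3})$ of $\QQ$ with exactly seven quadratic subfields $k_1,\dots,k_7$. Factoring $\zeta_K$ over the characters of $\Gal(K/\QQ)$ expresses $h_K$ as
\[
h_K=\frac{Q}{2^{\,t}}\prod_{i=1}^{7}h_{k_i},
\]
where each $h_{k_i}$ is a quadratic class number, $Q=[\cO_K^\times:\prod_i\cO_{k_i}^\times]$ is a unit index that is a power of $2$, and $2^{\,t}$ is an explicit power of $2$ recording the regulator and root-of-unity contributions determined by the signatures of the $k_i$. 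Since $K$ is imaginary, its seven quadratic subfields split into imaginary ones, governed by the lists of Gauss, Arno, Watkins and others recalled in Section \ref{subsect:prev_review}, and real ones, whose radicands are products of the first group and hence lie in a finite determined list.

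The mechanism forcing completeness is that $Q$ and $2^{\,t}$ are powers of $2$, so the odd part of $h_K$ equals that of $\prod_i h_{k_i}$; in particular $h_K\mid 2^m$ makes every $h_{k_i}$ a power of $2$. Tracking the $2$-adic valuations through the relative form of the formula used in the $\Extend{\cdot}{\cdot}$ step yields the sharper bound $h_{k_i}\mid 2^{m+1}$, which is exactly the input hypothesis of the algorithm. For the target $2^m=32$ we have $m\le 5$, so every imaginary quadratic subfield has class number dividing $64$ and therefore appears in Watkins's classification of imaginary quadratic fields of class number $\le 100$ \cite{Wat04}; the finitely many admissible real quadratic subfields are then determined and their class numbers computed directly. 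This collapses the search to an explicit finite list of admissible radicands.

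The concrete steps are then as follows. First I would assemble, for each target $2^m\le 32$, the finite set of admissible quadratic subfields from \cite{Wat04} and the corresponding biquadratic seeds from Theorem \ref{thm:biquad} and the earlier biquadratic classifications. I would then run $\Extend{F}{d}$: for each admissible biquadratic $F$ and each squarefree $d$ with $\QQ(\sqrt d)$ admissible and $F(\sqrt d)$ genuinely of degree $8$ (the radicands independent modulo squares), form $K=F(\sqrt d)$, check that all seven quadratic subfields lie on the allowed list, compute $h_K$ from the formula above, and retain $K$ when $h_K$ equals the prescribed value $2,4,8,16$ or $32$. Fields produced by different generating triples are identified by comparing the unordered set of their seven quadratic subfields, after which tallying the survivors gives the five counts in the statement.

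The main obstacle is the unit index $Q$. Because $K$ is imaginary its unit rank equals that of its maximal real subfield $K^{+}$, a real biquadratic field, so $Q$ is controlled by the fundamental units of $K^{+}$ and of the real quadratic subfields together with the roots of unity in $K$; pinning it down is genuine work on the real unit groups, following the approach of \cite{F17}, rather than a formula lookup, and an incorrect $Q$ would corrupt the final tallies. A secondary but essential point is completeness: one must verify that the bound $h_{k_i}\mid 2^{m+1}$ truly excludes every field whose quadratic subfields have larger class number, which rests on the power-of-$2$ nature of $Q$ and $2^{\,t}$ and should be checked against the precise relative formula of the $\Extend{\cdot}{\cdot}$ step. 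Once the unit indices are determined and the candidate list is confirmed exhaustive, the count is a finite verification.
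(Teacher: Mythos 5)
Your overall strategy (extend the known biquadratic fields by one radical, control the quadratic subfields via a Kuroda-type formula, then compute the class numbers of the survivors) matches the paper's, but your completeness argument has a genuine gap. You claim that $h_K\mid 2^m$ forces every imaginary quadratic subfield $k_i$ to satisfy $h_{k_i}\mid 2^{m+1}$, and hence that all such subfields lie in Watkins's tables. This does not follow from the formula you wrote: for a triquadratic $K$ the formula (Theorem \ref{classnumberformula} with $n=3$) gives $h_K=\tfrac18\,Q\,P(K)\,h_{k_3}$, so $h_K\mid 32$ only yields $P(K)\mid 2^{8}$, and since $P(K)$ is a product of \emph{four} imaginary quadratic class numbers, a single factor may be as large as $2^{8}=256$, which is outside the range of \cite{Wat04}. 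The bound $h_{k_i}\mid 2^{m+1}$ is the biquadratic statement (Corollary \ref{Cor:Thm20Cor}, where the denominator is only $2$); it does not propagate to all quadratic subfields in degree $8$. Your step ``check that all seven quadratic subfields lie on the allowed list'' would therefore discard legitimate candidates: the paper's statistics record $13$ surviving triquadratic candidates that have an imaginary quadratic subfield of class number $128$ or $256$.

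The paper closes this gap with Lemma \ref{lem:highermultiquadstep}: using the compositum decomposition of Lemma \ref{lem:atleastdeg8_composite} (inertia subgroups versus complex conjugation), every $L\in\A{3}{32}$ can be written as $K(\sqrt r)$ with $K\in\A{2}{32}$ where the single adjoined radicand satisfies $h_r\mid 64$ and $r\in\Q^-\left(\A{2}{32}\right)$; only that one radicand needs to come from the known list $\A{1}{64}$. The remaining new imaginary quadratic subfield, with radicand $sf(r r_1 r_2)$, is \emph{not} assumed to have small class number; its class number is computed directly for the finitely many radicands that actually occur (the sets $S'$ and ${\mathfrak{S}}_3$ in Algorithms \ref{algorithm:single_multiquad_candidate} and \ref{algorithm:multiquad_segment_n}), which is feasible because computing $h_s$ for a given $s$ is easy even when classifying all fields with that class number is not. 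To repair your argument you would need either this relative decomposition or a classification of all imaginary quadratic fields of class number dividing $256$, which is not available.
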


\begin{thm}\label{thm:quadriquad} 
There are no imaginary quadriquadratic fields of class number 1. In addition, we have the following. 
\begin{itemize}
\item There is exactly one imaginary quadriquadratic field of class number 2. This field has radicand list $\{-1, 2, 3, 5\}$.
\item There are five imaginary quadriquadratic fields of class number 4. They have radicand lists 
\[\{-2, -3, 5, -7\}, \{-1, 2, 3, 7\}, \{-1, 3, 5, 7\}, \{-1, 2, 5, 7\}, \{-1, 2, 3, 11\}.\]
\item Three of class number 8, with radicand lists given by
\[\{2, -3, 5, -7\}, \{-1, 3, 7, 13\}, \{-1, 3, 5, 11\}.\]
\item Six of class number 16, with radicand lists
\[\{-1, 5, 7, 11\}, \{-1, 3, 5, 13\}, \{-1, 3, 11, 17\}, \{-1, 2, 3, 17\}, \{-1, 2, 7, 11\}, \{-2, -3, 5, -11\}.\]
\item Twelve of class number 32, with radicand lists
\[\{2, -3, -5, 7\}, \{-1, 3, 7, 19\}, \{-2, -3, -5, -7\}, \{-2, -3, -11, 17\}, \{-2, 3, 5, -7\}, \{-1, 6, 7, 10\},\]
 \[\{-2, 3, -5, -7\}, \{-1, 5, 6, 7\}, \{-1, 3, 5, 19\}, \{-1, 3, 7, 10\}, \{2, -3, -5, -11\}, \{-2, -3, -11, -19\}\]
\end{itemize}
\end{thm}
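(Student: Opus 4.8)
The plan is to obtain this classification as the output of the algorithm of Section \ref{sect:multiquad_algorithm}, run with $m = 5$, and then to certify that the resulting finite list is complete. The crucial reduction, established earlier, is that if $K$ is an imaginary multiquadratic field with $h_K \mid 2^m$, then every imaginary quadratic subfield $\QQ(\sqrt{d}) \subset K$ has class number dividing $2^{m+1}$. For the case at hand this means I would start from the known list of all imaginary quadratic fields of class number dividing $64$ and regard the negative radicands occurring there as the only admissible imaginary quadratic subfields of a candidate quadriquadratic field.

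First I would enumerate the candidate fields $K = \QQ(\sqrt{a_1}, \sqrt{a_2}, \sqrt{a_3}, \sqrt{a_4})$. Since $K$ is a CM field, a degree-$16$ imaginary multiquadratic field has exactly $8$ imaginary and $7$ real quadratic subfields, indexed by the $15$ nontrivial squarefree products of the $a_i$, and completeness of the search requires that all $8$ imaginary ones appear in the admissible list. Following Sections \ref{sect:biquad_algorithm}--\ref{sect:multiquad_algorithm}, I would build these fields up one radicand at a time, extending admissible biquadratic fields to triquadratic fields and then to quadriquadratic fields, discarding at each stage any extension that creates an imaginary quadratic subfield outside the list. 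The degree bound of the paper guarantees that no imaginary field of degree exceeding $16$ can have class number dividing $32$, so this search terminates and the quadriquadratic case really is the final layer of the classification.

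For each surviving candidate $K$ I would then compute $h_K$ exactly. The natural device is Kuroda's class number formula applied recursively up the tower of subfields: it expresses $h_K$ as a power of two, times a unit index $q(K)$, times a product of class numbers of quadratic subfields. Because the class numbers of the admissible imaginary quadratic subfields are known, and are powers of two times small odd parts, the odd part of $h_K$ is pinned down at once; what remains is to determine the exact power of two. I would retain precisely those $K$ with $h_K \in \{2, 4, 8, 16, 32\}$, group them by class number, and record their radicand lists, cross-checking each value against a direct computation in a computer algebra system. The assertion that there is no imaginary quadriquadratic field of class number $1$ then follows from the same framework, and is consistent with the result of Feaver quoted above: genus theory forces the $2$-rank of $\mathrm{Cl}(K)$ to grow with the number of ramified primes, so a degree-$16$ imaginary field cannot have trivial class group.

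The hard part will be controlling the $2$-part of $h_K$. The odd part is forced by the subfield data, and the combinatorial enumeration is mechanical once the admissible list is fixed, but the power of two produced by Kuroda's formula depends on the unit index $q(K)$ and on the genus-theoretic contributions at each level of the tower; it is exactly these $2$-adic quantities that decide whether a given candidate has class number at most $32$ or just above. Computing them reliably, while avoiding the classical pitfalls in the evaluation of multiquadratic unit indices, is where the real work lies. A secondary obstacle is bookkeeping: one must confirm that distinct radicand lists never name the same field and that the extension process has genuinely visited every admissible quadriquadratic field, so that the stated counts---in particular the twelve fields of class number $32$---are exact rather than mere lower bounds.
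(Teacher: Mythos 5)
Your overall architecture---start from the imaginary quadratic fields of class number dividing $64$, build candidates recursively one radicand at a time, prune, and finish with exact class number determinations---matches the paper's. But the ``crucial reduction'' you rely on for completeness is not established in the paper and does not follow from its results: it is not proved that \emph{every} imaginary quadratic subfield of an imaginary multiquadratic $K$ with $h_K\mid 2^m$ has class number dividing $2^{m+1}$. Corollary \ref{Cor:Thm20Cor} gives this only for biquadratic $K$. For $n\geq 3$ the paper controls the subfields in two strictly weaker ways: Lemma \ref{lem:highermultiquadstep} shows only that $K$ can be written as $K'(\sqrt r)$ with $K'\in\A{n-1}{2^m}$ and the \emph{single} extension radicand $r$ lying in $\Q^-\left(\A{1}{2^{m+1}}\right)$, while Corollary \ref{cor:Psmall} bounds the product $P(K)=\prod_{r\in\Q_K^-}h_r$ by $2^{2^{n-1}+4}$, which for $n=4$ forces each individual $h_r$ only to be a power of $2$ dividing $2^{12}$, not $2^{6}$. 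Your pruning rule---discard any extension that creates an imaginary quadratic subfield outside the class-number-dividing-$64$ list---is therefore an unproved hypothesis: if some genuine field in $\A{4}{32}$ had an imaginary quadratic subfield of class number $128$, say (which Corollary \ref{cor:Psmall} permits a priori), your search would silently discard it and the claimed completeness would fail. The paper's vetting instead retains such candidates, tracks the unknown class numbers in the set $S'$, and eliminates a candidate only when the product bound \eqref{eq:estimate} is violated.

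Two smaller points. First, Theorem \ref{degbound} only constrains $n\geq 6$; it says nothing about degree-$32$ fields, so your assertion that the degree bound makes the quadriquadratic layer final is wrong---the paper establishes $\A{5}{32}=\emptyset$ by a further run of the algorithm, not by the bound (this does not affect the quadriquadratic count itself, but the justification is incorrect). Second, the paper never evaluates the unit index $Q$ of Theorem \ref{classnumberformula}: it uses the formula only for the one-sided divisibility $P(K)\mid 2^{2^{n-1}-1}h_K$ and then computes $h_K$ directly in Sage for the vetted candidates. Your plan to extract the exact power of $2$ from Kuroda's formula applied recursively would require computing those unit indices; you correctly identify this as the hard part, but you do not supply a method for it, whereas the paper's route avoids the issue entirely.
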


In addition, we conclude that there are no other imaginary multiquadratic fields of class number dividing $32.$

\subsection{Methods and overview}\label{subsect:methods_overview}

This paper describes a computational method to find all imaginary multiquadratic fields whose class number divides $2^m$. As input, the method requires a list of all imaginary quadratic fields whose class number divides $2^{m+1}$. The method is carried out for $m=5$; that is, for class numbers dividing $2^5=32$. This requires a list of all imaginary quadratic fields of class number dividing $2^6=64$. Additionally, we present statistics related to the effectiveness of our computational strategy.

The method is recursive. We select imaginary $n$-quadratic fields with the desired class number from extensions of imaginary $(n-1)$-quadratic fields by appropriately chosen radicands. This relies on a sequence of lemmas presented in Section \ref{sect:multiquad_lemmata}. To make the method computationally feasible, we rely on a class number proved by the first named author in \cite[Theorem 20]{F17}. This formula expresses the class number of an imaginary multiquadratic field in terms of the class numbers of all its imaginary quadratic subfields. 

The structure of the paper is as follows. Notation is introduced and some preliminaries on imaginary multiquadratic fields are reviewed and \cite[Theorem 20]{F17} is recalled (Theorem \ref{classnumberformula}) in Section \ref{sect:notation_and_preliminaries}. Section \ref{sect:boundondegree} gives a bound on the degree over $\QQ$ of the fields we seek. In Section \ref{sect:biquad_algorithm} we describe the first step of the method: the algorithm to find all imaginary biquadratic fields of class number dividing $32.$ After presenting the lemmas necessary for the further steps in Section \ref{sect:multiquad_lemmata}, the rest of the method is described in Section \ref{sect:multiquad_algorithm}. The details of the algorithm are included in the Appendix. 

\subsection{Acknowledgements} We are very thankful to Frank Calegari for suggesting this project, and his extremely helpful advice and guidance. We are also grateful for his feedback regarding the manuscript, including the proof of Lemma \ref{lem:Hilbertcontain}. During the first half of working on this paper, the second author was a postdoctoral fellow at the University of Alberta, supported by Manish Patnaik's Subbarao Professorship in Number Theory and NSERC grant. 

\section{Preliminaries}\label{sect:notation_and_preliminaries}

We begin by introducing some notation and terminology for imaginary multiquadratic fields and their radicand lists. Proofs of the statements below, and more detail can be found in ~\cite{F17}. 


\begin{definition}
Let $K$ be an $n$-quadratic field, and let $\ell $ be an integer such that $1\leq n\leq \ell$. A list of squarefree rational integers $a_1,...,a_\ell$ ($a_i\neq0,1$, $1\leq i\leq \ell$) is called a \em{radicand list} for $K$ if $K=\QQ(\sqrt{a_1},...,\sqrt{a_\ell})$. Further, $\{a_1,...,a_\ell\}$ is called a \em{primitive radicand list} if $n=\ell$.  
\end{definition}


\begin{definition} Let $n\in\NN$, and let $K$ be an $n$-quadratic field. A radicand list for $K$ that consists of exactly $2^n-1$ distinct, squarefree rational integers is called a {\em{complete radicand list}} for $K$. We denote this list by $\mathcal{Q}_K$ (or, more simply by $\mathcal{Q}$ when the field $K$ is clear from the context).
\end{definition}

For example, the triquadratic field with primitive radicand list $\{-1,2,3\}$ has complete radicand list $\mathcal{Q}=\{-1,-2,2,-3,3,-6,6\}.$ Though any multiquadratic field has several primitive radicand lists, there is always exactly one complete radicand list. In fact, given a primitive radicand list $\{a_1,...,a_n\}$ for an $n$-quadratic field $K$, we may construct a complete radicand list for $K$ in a straightforward manner: 
\begin{equation}\label{eq:allrads}
\mathcal{Q}_K=\left\{sf \left(\prod_{i\in I}a_i\right)\mid \emptyset\subsetneq I\subseteq\{1,...,n\}\right\},
\end{equation}
where $sf(a)$ denotes the squarefree part of the integer $a$; i.e. $sf(20) = 5$.

We recall two results on radicand lists. 

\begin{lem}\label{numberOfQuadraticSubfields1} \cite[Lemma 8]{F17} Let $K$ be an $n$-quadratic field. The set of squarefree integers $\{a_1,...,a_{2^n-1}\}$ is a complete radicand list for $K$ if and only if the fields given by $\QQ(\sqrt{a_i})$, $1\leq i\leq2^n-1$, are exactly the $2^n-1$ distinct quadratic subfields of $K$.
\end{lem}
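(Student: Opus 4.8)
The plan is to translate everything into the language of the $\FF_2$-vector space $V=\QQ^\ast/(\QQ^\ast)^2$, in which each squarefree integer $a\neq 0,1$ represents a unique nonzero class $\bar a$, and $\QQ(\sqrt a)=\QQ(\sqrt b)$ for squarefree $a,b\neq 0,1$ holds if and only if $\bar a=\bar b$. Fix a primitive radicand list $\{b_1,\dots,b_n\}$ for $K$ and set $W=\langle\bar b_1,\dots,\bar b_n\rangle\subseteq V$. Since $[K:\QQ]=2^n$, the standard multiquadratic degree formula $[\QQ(\sqrt d:d\in S):\QQ]=|\langle S\rangle|$ forces $\dim_{\FF_2}W=n$, so $|W\setminus\{0\}|=2^n-1$ and $\{\bar b_1,\dots,\bar b_n\}$ is an $\FF_2$-basis of $W$. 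More generally, for any set $S$ of squarefree integers one has $\QQ(\sqrt d:d\in S)=K$ exactly when the classes $\bar d$ generate $W$; in particular the class of any radicand for $K$ lies in $W$.

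The one genuinely structural input I would establish first is the correspondence between the quadratic subfields of $K$ and the nonzero elements of $W$. Using $\Gal(K/\QQ)\cong(\ZZ/2\ZZ)^n$, the Galois correspondence identifies quadratic subfields with index-$2$ subgroups, i.e.\ with kernels of nonzero characters in $\Hom(\Gal(K/\QQ),\ZZ/2\ZZ)$. The Kummer pairing $(\sigma,\bar w)\mapsto\sigma(\sqrt w)/\sqrt w\in\{\pm1\}$ is nondegenerate and exhibits $W$ as this character group, so each nonzero $\bar w\in W$ yields the quadratic subfield $\QQ(\sqrt w)$ (with $w$ its squarefree representative), distinct classes give distinct subfields, and every quadratic subfield arises this way. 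Hence $K$ has exactly $2^n-1$ quadratic subfields, namely $\QQ(\sqrt w)$ for $w$ ranging over the squarefree representatives of $W\setminus\{0\}$. I expect this step --- pinning down that there are \emph{no other} quadratic subfields --- to be the main obstacle; everything afterward is bookkeeping.

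With this dictionary both implications are short. For the forward direction, suppose $\{a_1,\dots,a_{2^n-1}\}$ is a complete radicand list. By definition its elements are distinct squarefree integers generating $K$, so the $\bar a_i$ are $2^n-1$ distinct nonzero elements of $W$; as $|W\setminus\{0\}|=2^n-1$, they are \emph{all} of them, whence the $\QQ(\sqrt{a_i})$ are precisely the $2^n-1$ quadratic subfields. Conversely, if the $\QQ(\sqrt{a_i})$ are exactly the $2^n-1$ distinct quadratic subfields, then the $\bar a_i$ are exactly the nonzero elements of $W$; in particular they span $W$, so $\QQ(\sqrt{a_1},\dots,\sqrt{a_{2^n-1}})=K$, and being $2^n-1$ distinct squarefree integers they meet the definition of a complete radicand list. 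As a consistency check with the explicit construction \eqref{eq:allrads}, I would note that $I\mapsto\sum_{i\in I}\bar b_i$ is a bijection from the nonempty subsets of $\{1,\dots,n\}$ onto $W\setminus\{0\}$ (because $\{\bar b_i\}$ is a basis), so $sf(\prod_{i\in I}b_i)$ runs exactly once through the squarefree representatives of the nonzero classes of $W$.
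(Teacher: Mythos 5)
Your proof is correct. Note that the paper does not actually prove this lemma itself --- it is quoted from \cite[Lemma~8]{F17} with the remark that proofs can be found there --- so there is no in-paper argument to compare against; your Kummer-theoretic treatment (identifying quadratic subfields of $K$ with the nonzero classes of the $n$-dimensional subspace $W\subseteq\QQ^\ast/(\QQ^\ast)^2$ generated by a primitive radicand list, then counting) is the standard route and matches the spirit of the construction in \eqref{eq:allrads}. The one step you rightly flag as the crux --- that \emph{every} quadratic subfield arises from a class in $W$ --- is handled adequately by your nondegenerate pairing argument, and the rest is indeed bookkeeping.
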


When negative radicands are present, we consider real and imaginary subfields:

\begin{lem}\label{numberOfQuadraticSubfields} \cite[Lemma 9]{F17} Let $K$ be an imaginary $n$-quadratic number field with $n>1$. Then $K$ has $2^{n-1}$ imaginary quadratic subfields and $2^{n-1}-1$ real quadratic subfields.
\end{lem}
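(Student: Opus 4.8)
The plan is to reduce the statement to a combinatorial count over the complete radicand list $\mathcal{Q}_K$. By Lemma \ref{numberOfQuadraticSubfields1}, the quadratic subfields of $K$ are exactly the fields $\QQ(\sqrt{a})$ for $a\in\mathcal{Q}_K$, and there are precisely $2^n-1$ of them. Since each such $a$ is squarefree and $\neq 0,1$, the subfield $\QQ(\sqrt{a})$ is imaginary exactly when $a<0$ and real exactly when $a>0$. Hence it suffices to count how many entries of $\mathcal{Q}_K$ are negative and how many are positive.

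First I would fix a primitive radicand list $\{a_1,\ldots,a_n\}$ for $K$ and use the explicit description \eqref{eq:allrads}, which exhibits $\mathcal{Q}_K$ as the image of the map sending a nonempty $I\subseteq\{1,\ldots,n\}$ to $sf\!\left(\prod_{i\in I}a_i\right)$. Because $sf$ preserves sign, the sign of this radicand is $\prod_{i\in I}\mathrm{sgn}(a_i)$. Writing $S=\{i:a_i<0\}$, the radicand attached to $I$ is negative precisely when $|I\cap S|$ is odd. Since $K$ is imaginary, no primitive radicand list can consist entirely of positive integers (otherwise $K\subseteq\RR$), so $s:=|S|\geq 1$.

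The counting step then splits each $I$ as $(I\cap S)\sqcup(I\cap S^c)$: the intersection with $S^c$ is arbitrary, contributing a factor of $2^{n-s}$, while the intersection with $S$ must have odd cardinality, and a nonempty set of size $s$ has exactly $2^{s-1}$ subsets of odd size. This yields $2^{n-s}\cdot 2^{s-1}=2^{n-1}$ subsets $I$ with $|I\cap S|$ odd; each is automatically nonempty, so these account for $2^{n-1}$ negative radicands, i.e. imaginary subfields. The remaining subsets, those with $|I\cap S|$ even, number $2^n-2^{n-1}=2^{n-1}$; discarding the empty set (which corresponds to $\QQ$ rather than a quadratic subfield) leaves $2^{n-1}-1$ positive radicands, i.e. real subfields, as claimed.

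The one point requiring care — and the step I would treat as the main obstacle — is that this subset count genuinely computes the number of subfields. This needs the map $I\mapsto sf\!\left(\prod_{i\in I}a_i\right)$ to be a bijection from nonempty subsets onto $\mathcal{Q}_K$, whereas \eqref{eq:allrads} gives only surjectivity a priori. However, both the domain (nonempty subsets of $\{1,\ldots,n\}$) and the codomain $\mathcal{Q}_K$ have exactly $2^n-1$ elements by Lemma \ref{numberOfQuadraticSubfields1}, so surjectivity forces bijectivity, and each subfield is counted exactly once. A feature worth noting is that the final counts are independent of $s$, and hence of the chosen primitive radicand list, even though $s$ itself is not.
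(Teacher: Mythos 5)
Your proof is correct. The paper itself gives no proof of this lemma --- it is quoted from \cite[Lemma 9]{F17} as a known result --- so there is no in-paper argument to compare against; your combinatorial derivation from \eqref{eq:allrads} and Lemma \ref{numberOfQuadraticSubfields1} is a valid self-contained justification. You correctly identified and closed the one genuine gap, namely that the map $I\mapsto sf\left(\prod_{i\in I}a_i\right)$ must be injective for the subset count to equal the subfield count, which follows from comparing the cardinality $2^n-1$ of both sides; the parity count over $I\cap S$ with $s\geq 1$ (forced by $K$ being imaginary) then gives exactly $2^{n-1}$ negative and $2^{n-1}-1$ positive radicands.
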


We will use $\Q_K^-$ (respectively, $\Q_K^+$) to denote the negative (respectively, positive) squarefree radicands in $\Q_K.$ These correspond to the imaginary quadratic (respectively, real quadratic) subfields of $K.$ 

If $\cH$ is any set of $n$-quadratic fields, $n\in\mathbb{Z}^+$, we use $\Q^-(\cH)$ to refer to the set of all negative radicands of all of the fields in $\cH .$ That is, $\Q^-(\cH)=\bigcup_{K\in \cH}\Q_K^-.$


Let $h_K$ denote the class number of the field $K.$ We refer the reader to ~\cite{Mar} if they wish to read the definition and background information on class numbers. For $r\in \Q ,$ $r<0,$ we use the shorthand $h_r:=h_{\QQ(\sqrt{r})}$ for the class number of an imaginary quadratic field.  Theorem \ref{classnumberformula} below relates the class number of the imaginary multiquadratic field $K$ to the class numbers of its imaginary quadratic subfields. We introduce the notation $P(K)$ for the product of the class numbers of all imaginary quadratic subfields of $K:$ 
\begin{equation}\label{eq:def:PofK}
P(K):=\prod _{r\in \mathcal{Q}_K^-} h_r. 
\end{equation}

For the remainder of the paper, we shall use $\A{n}{2^m}$ to refer to the set of imaginary $n$-quadratic fields whose class number divides $2^m.$ The algorithm of this paper relies on the input $\A{1}{2^{m+1}}.$ We shall use $\R_i$ to denote the set of radicands that determine imaginary quadratic fields of class number $2^i;$ i.e. 
$$\R_i=\{r\in \ZZ_{<0}\mid h_r=2^i \}=\Q^{-} (\A{1}{2^{i}}\setminus \A{1}{2^{i-1}}).$$ 
To determine $\A{n}{32}$ (for any $1\geq n$), we rely on the knowledge of the sets $\R_i$ for $0\leq i\leq 6.$

\subsection{A class number formula}

We state a theorem that will be the foundation of the arguments in the rest of the paper. To do so, we need further notation. 

Let $K$ be an imaginary $n$-quadratic field and $k$ a real $(n-2)$-quadratic subfield of $K$. Then $K/k$ is a $V_4$ extension, so there exist three intermediate fields $k_i$, $1\leq i\leq 3$ such that $k \subsetneq k_i\subsetneq K$. We will let $k_1,k_2$ denote the imaginary fields and $k_3$ the real field.

For any number field $L$, let $E(L)$ denote the unit group of $\mathcal{O}_L$. We define the \textit{unit index} $q(K/k)$ by 
\[q(K/k)=[E(K):E(k_1)E(k_2)E(k_3)].\]

\begin{thm}\label{classnumberformula} {\cite[Theorem 20]{F17}} With the notation above assume that there is as least one odd prime ramified in the extension $K/k$. Then we have
\[h_K=\left(\frac12\right)^{2^{n-1}-1}QP h_{k_3}.\]
Here, $P=P(K)$ is the product of the class numbers of all imaginary quadratic subfields of $K$, and $Q\in\ZZ_{>0}$ is a product of some unit indices $q(K'/k')$ with $k'\subseteq K$.
\end{thm}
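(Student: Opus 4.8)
The plan is to prove the formula by induction on $n$, using Kuroda's class number formula for the $V_4$-extension $K/k$ as the inductive engine, with the analytic class number formula and the conductor--discriminant formula underlying it. The base case $n=2$ is the classical Kuroda formula for an imaginary biquadratic field: here $k=\QQ$, the fields $k_1,k_2$ are the two imaginary quadratic subfields and $k_3$ the real one, and $h_K=\tfrac12\,q(K/\QQ)\,h_{k_1}h_{k_2}h_{k_3}$, which is exactly $\left(\tfrac12\right)^{2^{1}-1}QP\,h_{k_3}$ with $P=h_{k_1}h_{k_2}$ and $Q=q(K/\QQ)$.

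For the inductive step I would apply Kuroda's formula to $K/k$ to obtain
\[
h_K = 2^{e}\,q(K/k)\,\frac{h_{k_1}h_{k_2}h_{k_3}}{h_k^{2}},
\]
and then substitute the inductive hypothesis for the two imaginary $(n-1)$-quadratic fields $k_1,k_2$. Two structural facts make the expression collapse to the desired shape. First, $k_3$ is totally real of degree $[K:\QQ]/2$, hence it is the maximal real subfield $K^{+}$; in particular it contains every real quadratic subfield of $K$ and is independent of the choice of $k$. Second, because $k_1\cap k_2=k$ is totally real it has no imaginary quadratic subfield, so by Lemma \ref{numberOfQuadraticSubfields} the imaginary quadratic subfields of $k_1$ and of $k_2$ partition those of $K$; thus $P(K)=P(k_1)P(k_2)$ and $P$ is built up multiplicatively. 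Moreover $k$ is totally real of degree $[k_1:\QQ]/2$, so it is the maximal real subfield of each of $k_1,k_2$, which means the real class numbers supplied by the inductive hypotheses for $k_1$ and $k_2$ are both $h_k$ and cancel against the $h_k^{2}$ in the denominator. What remains is $P$, the single real class number $h_{k_3}$, a power of $2$ that should total $\left(\tfrac12\right)^{2^{n-1}-1}$, and a product of unit indices that I would package into $Q$.

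The step I expect to be the main obstacle is controlling the exact power of $2$: one must show $e=-1$ so that the exponents accumulated through the induction total precisely $2^{n-1}-1$, and that the residual unit contributions assemble into an honest integer product of unit indices rather than acquiring spurious $2$-power factors. The exponent in Kuroda's formula is governed by genus theory and drops to an exceptional value when ramification in the quadratic subextensions of $K/k$ occurs only at $2$ and the archimedean places; the hypothesis that some odd prime ramifies in $K/k$ is exactly what excludes this case and pins $e=-1$. Verifying that this control propagates through every layer of the recursion is the delicate point.

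Finally, to justify that the leftover constant $Q$ really is a product of unit indices $q(K'/k')$, I would unwind the regulators hidden in Kuroda's formula via the analytic class number formula. Running the zeta factorization for $k_3=K^{+}$ shows that the aggregate contribution of the real quadratic subfields equals $R_{k_3}h_{k_3}$, while the ratio $R_{k_3}/R_K$ is the Hasse unit index up to a power of $2$ (the standard CM regulator relation) and the root-of-unity factors $w_K/\prod_{r\in\Q_K^-}w_r$ contribute a further power of $2$ (any factor of $3$ arising from $\sqrt{-3}$ cancels). Tracking these against the $\left(\tfrac12\right)^{2^{n-1}-1}$ confirms that the surviving quantity is the positive integer $Q$, expressible as a product of the unit indices $q(K'/k')$ with $k'\subseteq K$.
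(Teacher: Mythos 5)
The paper does not prove this statement: it is quoted verbatim from \cite[Theorem 20]{F17}, so there is no in-paper proof to compare against. Your outline is the standard (and, as far as I can tell, the original source's) argument: induct on $n$ with Lemmermeyer's general Kuroda formula $h_K=2^{d-\kappa-2-\nu}\,q(K/k)\,h_{k_1}h_{k_2}h_{k_3}/h_k^2$ driving the step, and your structural observations are all correct --- $k_3$ is the maximal real subfield $K^+$, $k$ is the maximal real subfield of each $k_i$ so the $h_k^2$ cancels, and the $2^{n-1}$ imaginary quadratic subfields of $K$ are partitioned between $k_1$ and $k_2$ (each contributes $2^{n-2}$ by Lemma \ref{numberOfQuadraticSubfields}, and $k_1\cap k_2=k$ is real), giving $P(K)=P(k_1)P(k_2)$. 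The ``main obstacle'' you flag is in fact a short computation you should carry out: here $d=2^{n-2}$ (all infinite places of the totally real $k$ ramify), $\kappa=\rank E(k)=2^{n-2}-1$, and $\nu=0$ precisely because an odd prime ramifies in $K/k$ (otherwise $K=k(\sqrt{\varepsilon},\sqrt{\eta})$ with units $\varepsilon,\eta$ is possible, which is unramified outside $2$ and $\infty$); this gives the exponent $-1$ at every layer, and $1+2(2^{n-2}-1)=2^{n-1}-1$. Two remarks. First, to invoke the inductive hypothesis for $k_1,k_2$ when $n-1\geq 3$ you must exhibit a real $(n-3)$-quadratic subfield over which an odd prime ramifies; this existence claim (noted without proof after the theorem in the paper) deserves a line, e.g.\ by observing that any imaginary multiquadratic field of degree at least $8$ has an odd ramified prime and choosing the base field to avoid its inertia. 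Second, your final paragraph re-deriving the unit-index interpretation of $Q$ from the analytic class number formula is unnecessary once Lemmermeyer's formula is taken as given: $Q$ is simply the product of the indices $q(K'/k')$ accumulated through the recursion, each a positive integer by definition.
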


There are two important things to note in the above theorem. First, for an $n$-quadratic field $K$ with $n\geq3$, it is always possible to choose an $(n-2)$-quadratic subfield $k$ such that an odd prime ramifies in $K/k$. Also, the theorem does not describe exactly which unit indices are the factors of $Q$, as there is no straightforward way of expressing this. However, $q(K/k)$ is always one of these factors.

\section{A bound on the degree}\label{sect:boundondegree}

As we saw in Section \ref{subsect:prev_review}, the list of imaginary $n$-quadratic fields of class number $1$ stops at $n=3,$ i.e. with triquadratic fields. We shall see that if the class number is $2$, then $n$ is at most $4$, and that there is only one imaginary $4$-quadratic field of class number $2$. In this section, we show that if an imaginary $n$-quadratic field has class number $2^m,$ $m\geq 0$, then one may give a bound on $n$ that increases at a very slow rate when compared to the growth of $m$.


\begin{thm}\label{degbound} Let $K$ be an imaginary $n$-quadratic field with class number $2^m$, $m\geq0$. We have the following relationship between $m$ and $n$:
\begin{enumerate}
\item If $n=6$, $m\geq 7$.
\item If $n=7$, $m\geq 37$.
\item If $n=8$, $m\geq 99$.
\item If $n\geq 9$ then $m\geq 2^{n-1}-34$.
\end{enumerate}
\end{thm}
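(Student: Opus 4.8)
The plan is to bound the class number $h_K = 2^m$ from below by controlling the growth of the factor $P(K)$ in Theorem \ref{classnumberformula}, since $P(K) = \prod_{r \in \Q_K^-} h_r$ is a product over all $2^{n-1}$ imaginary quadratic subfields of $K$. The key observation is that the imaginary quadratic fields of small class number are scarce: there are only finitely many with class number dividing any fixed $2^i$, and the full list with $h_r$ a power of $2$ at most $32$ (equivalently $h_r \mid 64$) is known by the input data (Watkins' computation and the sets $\R_i$). So the strategy is to ask: given that $K$ has exactly $2^{n-1}$ imaginary quadratic subfields, all of which are \emph{distinct}, how small can the product of their class numbers possibly be?

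First I would rewrite the formula as $h_K = \left(\tfrac12\right)^{2^{n-1}-1} Q \, P(K)\, h_{k_3}$, and note that $Q \in \ZZ_{>0}$ and $h_{k_3} \geq 1$, so $2^m = h_K \geq \left(\tfrac12\right)^{2^{n-1}-1} P(K)$, giving the clean bound $P(K) \leq 2^{m + 2^{n-1} - 1}$. Equivalently, writing $v_2$ for the $2$-adic valuation and using that every $h_r$ is even for $r$ not in the class-number-one list, $m \geq v_2(P(K)) - (2^{n-1}-1)$. Thus I need a \emph{lower bound on $v_2(P(K))$}, i.e. on the sum $\sum_{r \in \Q_K^-} v_2(h_r)$ over the $2^{n-1}$ distinct imaginary quadratic subfields. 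The nine imaginary quadratic fields of class number $1$ contribute $0$, the eighteen of class number $2$ contribute $1$ each, and so on; since a multiquadratic field can contain at most finitely many subfields from each class-number stratum $\R_i$, once $2^{n-1}$ exceeds the total supply of low-class-number fields, most subfields are forced to have large $2$-adic valuation.

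The main combinatorial step is a counting/packing argument: among the $2^{n-1}$ distinct imaginary quadratic subfields, at most $|\R_0| = 9$ can have class number $1$, at most $|\R_1|$ can have class number $2$, and so forth, so to minimize $v_2(P(K))$ one greedily fills the subfields using as many low-valuation radicands as the known counts $|\R_i|$ permit. This produces the minimal possible value of $\sum v_2(h_r)$ as a function of $N := 2^{n-1}$, and feeding that into $m \geq \sum v_2(h_r) - (N-1)$ yields the stated inequalities for $n = 6,7,8$ by direct substitution of the cardinalities $|\R_i|$, and the asymptotic bound $m \geq 2^{n-1} - 34$ for $n \geq 9$ once $N$ is so large that essentially every subfield beyond a fixed finite reserve must contribute $v_2(h_r) \geq 2$ (so the sum grows like $2 \cdot 2^{n-1}$ minus a constant correction, and subtracting $2^{n-1}-1$ leaves a leading term $2^{n-1}$ minus the constant $34$). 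I expect the main obstacle to be the constraint that the subfields are \emph{not independent}: the $2^{n-1}$ imaginary quadratic subfields of a genuine $n$-quadratic field are exactly the negative entries of the structured set \eqref{eq:allrads}, not an arbitrary collection of distinct radicands, so one must verify that the counting bound (which ignores this structure) is still valid and, for the exact numerical constants $7, 37, 99, 34$, that the greedy minimum is actually compatible with the multiquadratic lattice structure rather than being unachievable in a way that would only strengthen the inequality. I would check that the packing bound is used only as a \emph{lower} bound on $v_2(P(K))$, so any failure of achievability makes $m$ larger and the inequalities remain safe, and then confirm the constant $34$ arises precisely as $\sum_{i} |\R_i|(2 - v_2(2^i))$ summed over the strata with $i \leq 1$ together with the $-1$ bookkeeping term.
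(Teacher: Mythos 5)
Your overall frame---bound $P(K)$ from below by a packing argument over the strata $\R_i$ and feed the result into Theorem \ref{classnumberformula}---is the same as the paper's for the case $n\geq 9$, but for $n=6,7,8$ the pure greedy packing bound is not strong enough, and you are missing the one idea that closes the gap. Using only ``at most $|\R_0|=9$ subfields have class number $1$, at most $|\R_1|=18$ have class number $2$, and the rest have class number at least $4$,'' you get $v_2(P(K))\geq 18+2(2^{n-1}-27)$ and hence $m\geq 2^{n-1}-35$ (or $-34$ after the correction discussed below). For $n=6$ this reads $m\geq -3$, which is vacuous; for $n=7$ and $n=8$ it gives $m\geq 29$ and $m\geq 93$, short of the claimed $37$ and $99$. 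The paper closes this with Lemma \ref{primeradicands}: at most $n$ of the imaginary quadratic subfields of an $n$-quadratic field can have radicand equal to $-1$ or a negative prime, because such radicands generate a strictly increasing tower of subfields inside a field of degree $2^n$. Since every element of $\R_0$ and exactly three elements of $\R_1$ (namely $-5,-13,-37$) are of this form, for $6\leq n\leq 8$ at most $n+15$ of the $2^{n-1}$ subfields can have class number $\leq 2$ --- far fewer than the $27$ your packing allows --- and this is precisely what yields $m\geq 2^{n-1}-2n-13$, i.e.\ $7$, $37$, $99$. Your remark that the lattice structure \eqref{eq:allrads} can ``only strengthen the inequality'' is true but points the wrong way: the structure is not a threat to the validity of your bound, it is the indispensable extra input, and without quantifying it you cannot reach the stated constants.

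A second, smaller defect: you discard $h_{k_3}$ with ``$h_{k_3}\geq 1$.'' The paper invokes the fact (\cite[Corollary 15]{F17}) that $2\mid h_{k_3}$ once $n\geq 6$, and this extra factor of $2$ is what turns $2^{n-1}-35$ into the claimed $2^{n-1}-34$ in the case $n\geq 9$. Indeed your own closing audit of the constant, $\sum_{i\leq 1}|\R_i|(2-i)=36$ minus a single bookkeeping unit, produces $35$, not $34$; the missing unit is exactly the evenness of $h_{k_3}$.
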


\begin{rmk} Specifically, the above theorem tells us that if a $6$-quadratic field has a class number that is a power of $2,$ then the class number must be at least $128$. For $7$-quadratic fields, a class number that is a power of $2$ must be substantially larger: at least $2^{37}=137,438,953,472.$ 
\end{rmk}

Before proceeding with the proof of Theorem \ref{degbound} we state a lemma. 

\begin{lem}\label{primeradicands} If $K$ is an imaginary $n$-quadratic field, then at most $n$ of its imaginary quadratic subfields have radicands that are equal to a negative prime number or $-1$.
\end{lem}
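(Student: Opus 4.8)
```latex
\begin{proof}[Proof proposal]
The plan is to analyze which elements of the complete radicand list $\Q_K$ can be a negative prime or $-1$, and to show that the structure of $\Q_K$ as it arises from a primitive radicand list via \eqref{eq:allrads} severely restricts how many such elements can occur. By Lemma \ref{numberOfQuadraticSubfields1}, the radicands in $\Q_K$ are in bijection with the quadratic subfields of $K$, and by Lemma \ref{numberOfQuadraticSubfields} exactly $2^{n-1}$ of them lie in $\Q_K^-$ (the negative radicands). So the question is really: among these $2^{n-1}$ negative squarefree integers, how many can be of the form $-p$ for $p$ prime (or $-1$)?

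First I would fix a primitive radicand list $\{a_1,\dots,a_n\}$ for $K$ and recall that every element of $\Q_K$ is the squarefree part of a product $\prod_{i\in I}a_i$ over a nonempty subset $I\subseteq\{1,\dots,n\}$. The key observation is that each $a_i$ itself factors into primes, and an element $sf\!\left(\prod_{i\in I}a_i\right)$ equals a negative prime or $-1$ only if essentially all the prime factors appearing in the $a_i$ for $i\in I$ cancel except for at most one. I would set up the problem linear-algebraically over $\FF_2$: associate to each squarefree integer its vector of prime-exponent parities (including a coordinate for the sign), so that $sf$-multiplication becomes addition in an $\FF_2$-vector space. The radicands corresponding to quadratic subfields of $K$ then span (together with the sign coordinate) the subspace generated by the images of $a_1,\dots,a_n$, which is $n$-dimensional.

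The core of the argument is then a counting/independence bound. An element of $\Q_K$ that is a negative prime $-p$ corresponds to a vector whose only nonzero coordinates are the sign coordinate and the single coordinate for $p$; the element $-1$ corresponds to the vector supported only on the sign coordinate. I would argue that vectors of this special ``weight $\le 1$ away from the sign coordinate'' form, together with the sign vector, a linearly independent set once we quotient out the sign, because distinct primes give distinct coordinate directions. Since all these vectors lie in an $n$-dimensional $\FF_2$-space and are (after accounting for the shared sign coordinate) linearly independent, there can be at most $n$ of them. This yields the bound of $n$ on the number of imaginary quadratic subfields whose radicand is a negative prime or $-1$.

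The main obstacle I anticipate is handling the sign coordinate cleanly: all the special vectors share the sign bit, so they are not literally independent, and I must be careful to extract the correct count (the naive bound might otherwise give $n+1$ or $n-1$). The right framing is to observe that the $\FF_2$-span of $\{a_1,\dots,a_n\}$ has dimension exactly $n$, that the sign vector itself lies in this span (since $K$ is imaginary, some radicand is negative), and that a set of vectors each of which is the sign vector plus a distinct single prime-coordinate unit vector can have size at most $n$ inside an $n$-dimensional space containing the sign vector. Verifying this last linear-algebra fact, and confirming that the prime-coordinate unit vectors involved are genuinely distinct for distinct radicands, is the delicate step; everything else is bookkeeping with the bijection of Lemma \ref{numberOfQuadraticSubfields1}.
\end{proof}
```
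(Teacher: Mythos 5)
Your approach is correct in substance, and it is really the paper's argument translated into $\FF_2$-linear algebra. The paper's proof takes the elements $-a_1,\dots,-a_k$ of $\Q_K$ that equal $-1$ or a negative prime and observes, using \eqref{eq:allrads}, that the tower $\QQ\subsetneq\QQ(\sqrt{-a_1})\subsetneq\cdots\subsetneq\QQ(\sqrt{-a_1},\dots,\sqrt{-a_k})$ is strict at every step, whence $2^k\leq[K:\QQ]=2^n$ and $k\leq n$. Strictness of that tower is exactly the linear independence of your parity vectors in $\QQ^\ast/(\QQ^\ast)^2$, and the degree count $2^k\leq 2^n$ is your dimension count $k\leq n$; the paper simply avoids setting up the vector space explicitly. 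So the two routes are equivalent, with yours somewhat more verbose but making the independence statement explicit.

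One assertion in your plan is false, though fortunately unnecessary. You claim that ``the sign vector itself lies in this span (since $K$ is imaginary, some radicand is negative).'' A negative radicand only contributes a vector with nonzero sign coordinate, not the sign vector itself: for $K=\QQ(\sqrt{-3},\sqrt{5})$, writing $e_{-1}$ for the sign unit vector and $e_p$ for the coordinate of the prime $p$, the span of $e_{-1}+e_3$ and $e_5$ does not contain $e_{-1}$ (equivalently, $\QQ(\sqrt{-1})\not\subseteq K$). You do not need this hypothesis. The collection consisting of at most one copy of $e_{-1}$ (coming from the radicand $-1$) together with the vectors $e_{-1}+e_p$ for distinct primes $p$ is linearly independent outright: a vanishing $\FF_2$-combination must involve an even number of terms to cancel the sign coordinate, and then the distinct $e_p$'s force the combination to be empty. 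Hence at most $n$ such vectors fit in the $n$-dimensional span of the primitive radicands, with no side condition on that span. With this correction the ``delicate step'' you flagged goes through cleanly, and your proof is complete.
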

\begin{proof} 
Let $\mathcal{Q}$ be the complete radicand list for $K$ and $\{-a_1,-a_2,...,-a_k\}$ the subset of $\mathcal{Q}$ such that $a_i$ is either equal to $1$ or a rational prime. It follows from \eqref{eq:allrads} that $\QQ\subsetneq \QQ(\sqrt{-a_1}) \subsetneq \QQ(\sqrt{-a_1},\sqrt{-a_2}) \subsetneq \cdots \subsetneq \QQ (\sqrt{-a_1},\sqrt{-a_2},...,\sqrt{-a_k}).$ Since $[K:\QQ]=2^n,$ this implies $k\leq n.$
\end{proof}

Now we are ready to prove Theorem \ref{degbound}.
\begin{proof} Recall that there are 9 imaginary quadratic fields of class number 1. The radicands of these fields are $\R_0=\{-1,-2,-3,-7,-11,-19,-43,-67,-163\}$. Observe that this list consists of  $-1$ and negative primes. Note that of the $18$ imaginary quadratic fields of class number $2,$ three have prime radicands: $-5, -13$ and $-37;$ two have radicands that are products of elements of $\R_0$: $-6=(-1)\cdot(-2)\cdot(-3)$ and $22=(-1)\cdot(-2)\cdot(-11)$. The remaining radicands are the negative product of two primes. The proof relies on these observations. 

Let $K$ be an imaginary $n$-quadratic field. Recall that $K$ has $\Q _K^-=2^{n-1}$ distinct imaginary quadratic subfields. Let $h_K=2^m$ for some nonnegative integer $m$. Theorem \ref{classnumberformula} states:
\begin{equation*}
2^m=\left(\frac12\right)^{2^{n-1}-1}QPh_{k_3}\geq \left(\frac12\right)^{2^{n-1}-1}Ph_{k_3}.
\end{equation*}
Since $h_K$ is a power of $2$, all of the class numbers contributing the product $P$ must be powers of $2$ as well. Note furthermore that if $n\geq 6,$ we have $2\mid h_{k_3}$ ~\cite[Corollary 15]{F17} . 

We consider the case where $n\geq 9$ and $6\leq n<9$ separately. First let $n\geq 9$. Then we have that 
\begin{align*}
2^m & \geq 2^{1-2^{n-1}}\cdot P\cdot h_{k_3} \\
&\geq 2^{1-2^{n-1}}\cdot 1^9\cdot 2^{18}4^{2^{n-1}-27}\cdot2 \\
&= 2^{1-2^{n-1}}\cdot 1^9 \cdot 2^{18}2^{2^{n-1}+2^{n-1}-27-27+1} \\
&=  2^{-34}2^{2^{n-1}}.
\end{align*}


Thus we conclude
\begin{equation*}
m\geq 2^{n-1}-34\text{ whenever }n\geq9.
\end{equation*}

Now assume that $6\leq n<9$. Then by Lemma \ref{primeradicands}, we have that 
$$|\Q_K^-\cap (\R_0\cup \{-5,-13,-37\})|\leq n.$$ 
(Recall that $\{-5,-13,-37\}$ are the prime elements of $\R_1.$) Note that $6\leq n $ implies that $2^{n-1}-n>15=18-3=|\R_1|-3.$
Then
\begin{align*}
2^m & \geq 2^{1-2^{n-1}}\cdot P\cdot h_{k_3} \\
& \geq 2^{2-2^{n-1}}\cdot 1^n\cdot 2^{15}\cdot 4^{2^{n-1}-n-15}\\
& =2^{2-2^{n-1}}\cdot 2^{15} 2^{2\cdot (2^{n-1}-n-15)}\\
& =2^{2^{n-1}-2n-13}.
\end{align*}

Thus, we have the following:
\begin{itemize}
\item if $n=8$, then $m\geq 2^7-16-13=99$;
\item if $n=7$, then $m\geq 2^6-14-13=37$;
\item if $n=6$, then $m\geq 2^5-12-15=7$.
\end{itemize}
\end{proof}

\section{Imaginary Biquadratic Fields of Class Number Dividing 32}\label{sect:biquad_algorithm}

The results of the last section suggest that finding all imaginary multiquadratic fields of class number dividing 32 is tractable, since we know we will not have to push our search beyond imaginary 5-quadratic fields. In fact, we find that our bound on $n$ is not tight, and we can terminate our search sooner.

Our technique for finding $n$-quadratic fields whose class number divides $2^m$ is different for $n=2$ and $n\geq3$. We discuss the case $n=2$ in this section. We make use of the following theorem, due to Lemmermeyer:

\begin{thm}\label{thm:KurodaForImBiquad}
\cite[Theorem 1]{Lem94} 
Let $K$ be an imaginary biquadratic number field with class number denoted by $h_K$. Denote by $k_1$ and $k_2$ the imaginary quadratic subfields of $K$, and by $k_3$ the real quadratic subfield of $K$. Let $h_{k_1},h_{k_2},h_{k_3}$ be the class numbers of $k_1,k_2,k_3$, respectively. For $L$ any number field, let $E(L)$ denote the unit group of $\mathcal{O}_L.$

Then we have 
\begin{equation}\label{eq:KurodaForImBiquad}
h_K=\frac{1}{2} h_{k_1} h_{k_2} h_{k_3}\cdot [E(K):E(k_1)E(k_2)E(k_3)].
\end{equation}

\end{thm}

Recall that we use the notation $\A{2}{c}$ to denote the set of all imaginary biquadratic fields of class number dividing $c$. Theorem \ref{thm:KurodaForImBiquad} has the following obvious corollary: 

\begin{cor}\label{Cor:Thm20Cor}
If $K\in \A{2}{2^m}$ then $h_{k_1}h_{k_2}h_{k_3}\mid 2^{m+1}.$
\end{cor}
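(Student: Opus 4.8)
The plan is to derive the corollary directly from the class number formula in Theorem \ref{thm:KurodaForImBiquad} by isolating the product $h_{k_1}h_{k_2}h_{k_3}$ and tracking divisibility by powers of $2$. First I would rewrite equation \eqref{eq:KurodaForImBiquad} to solve for the product of the three class numbers of the quadratic subfields:
\[
h_{k_1}h_{k_2}h_{k_3}=\frac{2\,h_K}{[E(K):E(k_1)E(k_2)E(k_3)]}.
\]
The hypothesis $K\in\A{2}{2^m}$ means precisely that $h_K\mid 2^m$, so $2h_K\mid 2^{m+1}$. The key observation is that the unit index $q:=[E(K):E(k_1)E(k_2)E(k_3)]$ is a positive integer, and indeed a divisor of the numerator on the right-hand side (since the left-hand side is an integer), so that $h_{k_1}h_{k_2}h_{k_3}$ is itself a divisor of $2h_K$, hence of $2^{m+1}$.

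The one point requiring care is to argue cleanly that $h_{k_1}h_{k_2}h_{k_3}$ divides $2h_K$ rather than merely equalling $2h_K/q$. The cleanest route is to note that $h_{k_1}h_{k_2}h_{k_3}$ is a positive integer and that the identity $q\cdot(h_{k_1}h_{k_2}h_{k_3})=2h_K$ exhibits $h_{k_1}h_{k_2}h_{k_3}$ as a divisor of $2h_K$; combined with $2h_K\mid 2^{m+1}$, transitivity of divisibility yields $h_{k_1}h_{k_2}h_{k_3}\mid 2^{m+1}$. I would want to confirm that $q$ is an integer, which is exactly the content of the unit index being a group index $[E(K):E(k_1)E(k_2)E(k_3)]$ of finite-index subgroups; this is guaranteed by the setup of Theorem \ref{thm:KurodaForImBiquad}.

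The main (and essentially only) obstacle is conceptual rather than computational: one must resist the temptation to conclude divisibility from the mere equation $h_{k_1}h_{k_2}h_{k_3}=2h_K/q$, since dividing by $q$ could in principle spoil a divisibility relation. The correct logical order is to keep $q$ on the integer side, using $q\cdot(h_{k_1}h_{k_2}h_{k_3})=2h_K$ to make $h_{k_1}h_{k_2}h_{k_3}$ a \emph{factor} of $2h_K$. Once this is phrased correctly, the corollary is immediate, which is why the statement is labelled as ``obvious.'' No deeper input about the arithmetic of the fields $k_1,k_2,k_3$ is needed beyond the formula itself and the positivity and integrality of the unit index.
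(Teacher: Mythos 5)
Your argument is correct and is exactly the intended one: the paper states this corollary without proof (calling it ``obvious''), and the implicit reasoning is precisely your rearrangement $q\cdot(h_{k_1}h_{k_2}h_{k_3})=2h_K$ with $q$ a positive integer, giving $h_{k_1}h_{k_2}h_{k_3}\mid 2h_K\mid 2^{m+1}$. Your care in keeping $q$ on the integer side of the equation is the right way to phrase the divisibility, so there is nothing to add.
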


We seek to obtain $\A{2}{32}$. Using the notation of Corollary \ref{Cor:Thm20Cor} we have  $h_{k_1} h_{k_2} h_{k_3}\mid 64$ for any $K\in \A{2}{32}.$ 

To determine $K\in \A{2}{32}$ we first find a list of all biquadratic fields such that $h_{k_1} h_{k_2}\mid 64$, as described in Algorithm \ref{biquad_candidates_1} below. This yields $82,531$ biquadratic candidate fields. 

Since computing the class number of a biquadratic field tends to be computationally several times more expensive than computing the class number of a quadratic field, we find $h_{k_3}$ and only keep those candidate fields with $h_{k_1} h_{k_2} h_{k_3}\mid 64$. The process for doing this is described in Algorithm \ref{biquad_candidates_2}. After running this second algorithm, we have $11,607$ candidate fields remaining.

\begin{algorithm}[h!]
\caption{Biquadratic fields with $h_{k_1}h_{k_2}\leq64$.}\label{biquad_candidates_1}

\begin{flushleft}
\textbf{Precondition:} We have sets of radicands, $\R_i$, for $0\leq i\leq 6$. Each set contains all of the squarefree radicands that are the radicands of imaginary quadratic fields of class number $2^i$ as in section \ref{sect:notation_and_preliminaries}. \\
\textbf{Postcondition:}  The variable \texttt{biquad\_candidates} is a set of ordered triples. In each triple, the first two integers are squarefree negative integers $a$ and $b$ such that $P':=h_a h_b \leq 64$. The third integer in this ordered triple is the product of class numbers $P'$.
\end{flushleft}

\begin{algorithmic}[1]
\State \texttt{biquad\_candidates}$\gets\{\}$
\For{$i\gets0$ \textbf{to} 6}
\For{$j\gets0$ \textbf{to} 6}
\If{$i+j\leq6\And i< j$}
\State \texttt{rad\_lists\_with\_prod} $\gets \{(a,b,2^{i+j}):a\in \R_i,b\in \R_j\}$
\State \texttt{biquad\_candidates} $\gets$\texttt{biquad\_candidates}$\cup$\texttt{rad\_lists\_with\_prod}
\EndIf
\If{$i+j\leq6\And i= j$}
\State \texttt{rad\_lists\_with\_prod}$\gets \{(a,b,2^{2i}):a\in \R_i,b\in \R_i, a<b\}$
\State \texttt{biquad\_candidates}$\gets$\texttt{biquad\_candidates}$\cup$\texttt{rad\_lists\_with\_prod}
\EndIf
\EndFor
\EndFor
\end{algorithmic}
\end{algorithm}

%
%



\begin{algorithm}[h!]
\caption{Biquadratic fields with $h_{k_1}h_{k_2}h_{k_3}\leq64$.}\label{biquad_candidates_2}

\begin{flushleft}
\textbf{Precondition:} We have created the set \texttt{biquad\_candidates} produced in Algorithm \ref{biquad_candidates_1} above. \\
\textbf{Postcondition:} We have a set of radicand lists of length 2, consisting of two negative integers $a$ and $b$. These all have the property that $h_a h_b h_{ab} \leq 64$.
\end{flushleft}

\begin{algorithmic}[1]
\State \texttt{biquad\_rad\_lists}$\gets\{\}$
\For{$tuple$ \textbf{in} \texttt{biquad\_candidates}}
\State $a\gets$ first element in the tuple
\State $b\gets$ second element in the tuple
\State $P'\gets$ third element in the tuple
\State $c\gets sf(ab)$
\If{$P'h_c\leq64$ }
\State \texttt{biquad\_rad\_lists}=\texttt{biquad\_rad\_lists}$\cup\{\{a,b\}\}$
\EndIf
\EndFor
\end{algorithmic}
\end{algorithm}

We compute the class number of each of these $11,607$ candidate fields individually using Sage. What we find is that $11,207$ of these candidate fields are in $\A{2}{32}$, and the $400$ remaining fields have class number equal to $64$. More specifically, we get Theorem \ref{thm:biquad} as a result. Note that the number of biquadratic fields of class number $1$ and $2$ were previously known; our computations reproduced the list of previously known fields.

\section{Tools for Fields with $n\geq3$}\label{sect:multiquad_lemmata}

In order to find all imaginary $n$-quadratic fields with $n\geq 3$ whose class number divides $32$, we make use of several lemmas. For a number field $K$, we denote by $H_K$ its Hilbert class field. Recall that this is the smallest unramified abelian extension of $K$ and it has the property that $[H_K:K]=h_k$.

\begin{lem}\label{lem:Hilbertcontain}
Let $L/K$ be an extension of number fields. Then $H_K.L\subset H_L.$ 
\end{lem}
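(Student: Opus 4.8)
The plan is to show that the compositum $H_K\cdot L$ is an unramified abelian extension of $L$; once this is established, the inclusion $H_K\cdot L\subseteq H_L$ is immediate, since $H_L$ is the maximal unramified abelian extension of $L$ and hence contains every unramified abelian extension of $L$. Throughout I fix an algebraic closure $\overline{\QQ}$ containing $H_K$, $L$ and $H_L$, so that the compositum $H_K\cdot L$ is well defined, and I note that $H_K\cdot L/L$ is finite since $H_K/K$ is.

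First I would dispose of the abelian part, which is routine Galois theory. Since $H_K/K$ is abelian it is in particular Galois, so $H_K\cdot L/L$ is Galois and restriction of automorphisms to $H_K$ gives an injective homomorphism $\Gal(H_K\cdot L/L)\hookrightarrow \Gal(H_K/K)$. As a subgroup of an abelian group, $\Gal(H_K\cdot L/L)$ is abelian, so $H_K\cdot L/L$ is abelian.

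The real content is to check that $H_K\cdot L/L$ is unramified at every place of $L$, and this is the step I expect to be the main obstacle. The guiding principle is that base change preserves unramifiedness: if a prime $\mathfrak{p}$ of $K$ is unramified in $H_K$, then every prime of $L$ above $\mathfrak{p}$ is unramified in $H_K\cdot L$. I would verify this by passing to completions. Fix a prime $\mathfrak{Q}$ of $H_K\cdot L$ and set $\mathfrak{P}=\mathfrak{Q}\cap L$, $\mathfrak{q}=\mathfrak{Q}\cap H_K$ and $\mathfrak{p}=\mathfrak{Q}\cap K$. Because $H_K/K$ is Galois, the completion $(H_K\cdot L)_{\mathfrak{Q}}$ is the local compositum $(H_K)_{\mathfrak{q}}\cdot L_{\mathfrak{P}}$. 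Now $(H_K)_{\mathfrak{q}}/K_{\mathfrak{p}}$ is unramified, so it is generated over $K_{\mathfrak{p}}$ by a root of unity whose order is prime to the residue characteristic; adjoining such a root of unity to the local field $L_{\mathfrak{P}}$ again yields an unramified extension. Hence $(H_K)_{\mathfrak{q}}\cdot L_{\mathfrak{P}}/L_{\mathfrak{P}}$ is unramified, i.e. $e(\mathfrak{Q}/\mathfrak{P})=1$.

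For the archimedean places the same local argument applies. If $w$ is a real place of $L$ lying over a place $v$ of $K$, then $v$ is real, and since $H_K/K$ is unramified at $v$ every place of $H_K$ above $v$ is real; the local compositum of $\RR$ with $\RR$ is $\RR$, so every place of $H_K\cdot L$ above $w$ is real and no archimedean ramification is introduced (if $w$ is complex there is nothing to check). Combining the finite and infinite cases shows $H_K\cdot L/L$ is unramified everywhere, and together with abelianness and the maximality of $H_L$ this gives $H_K\cdot L\subseteq H_L$. The one point that genuinely requires care is the identification of $(H_K\cdot L)_{\mathfrak{Q}}$ with the local compositum $(H_K)_{\mathfrak{q}}\cdot L_{\mathfrak{P}}$ and the local characterization of unramified extensions by roots of unity of order prime to the residue characteristic; everything else is formal.
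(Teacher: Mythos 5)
Your proof is correct and follows the same strategy as the paper's: show that $H_K\cdot L/L$ is an abelian and everywhere unramified extension of $L$ and conclude by the maximality of $H_L$. The only difference is that you fill in the details the paper leaves implicit (the restriction map for abelianness, and the passage to completions for preservation of unramifiedness under base change, including the archimedean places), all of which are carried out correctly.
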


\begin{proof}
Since $H_K/K$ is an Abelian extension, so is $H_K.L/L.$ Similarly, $H_K/K$ is unramified. Then for any $w$ prime in $\cO_L,$ let $v\in \cO_K$ be a prime so that $w$ lies over $v.$ Since $H_K/K$ is unramified at $v,$ $H_K.L/L$ is unramified at $w.$ So $H_K.L$ is an unramified and Abelian extension of $L.$ 
\end{proof}


\begin{cor}\label{cor:abelian_divis}
Let $L/K$ be Abelian. Then $[H_L:K]=[H_L:L]\cdot [L:K].$
\end{cor}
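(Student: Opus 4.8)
The plan is to read the claimed identity as a pure instance of the multiplicativity of field-extension degrees (the tower law), once the correct tower is identified. By definition the Hilbert class field $H_L$ is an extension of $L$, and $L$ is an extension of $K$, so we have a tower of number fields $K\subseteq L\subseteq H_L$. First I would record that both constituent degrees are finite: $[L:K]$ is finite (indeed finite by hypothesis, though only finiteness is used here), and $[H_L:L]=h_L$ is finite because the ideal class group of $L$ is finite. Multiplicativity of degrees in this tower then gives the result at once:
\[
[H_L:K]=[H_L:L]\cdot[L:K].
\]

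It is worth being explicit about the status of the two ingredients flagged by the statement, namely the hypothesis that $L/K$ is Abelian and the preceding Lemma \ref{lem:Hilbertcontain}. The degree identity itself requires neither: multiplicativity holds for any tower of finite extensions regardless of Galois structure. Lemma \ref{lem:Hilbertcontain} supplies the natural companion containment $H_K.L\subseteq H_L$, and hence $H_K\subseteq H_L$, placing $K$, $H_K$, $L$, and $H_L$ inside a single extension $H_L/K$; this is what makes the degree count a useful bookkeeping tool downstream. If one additionally wants $H_L/K$ to be Galois, that is the step where the Abelian (hence normal) hypothesis on $L/K$ genuinely enters: since $H_L$ is the canonically attached maximal unramified Abelian extension of $L$, any $K$-embedding $\sigma\colon L\to\bar K$ carries $H_L$ to the Hilbert class field of $\sigma(L)$, and normality of $L/K$ forces $\sigma(L)=L$, so $\sigma(H_L)=H_L$ and $H_L/K$ is normal (and separable, so Galois).

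Consequently there is no real obstacle to the bare equation: the only care needed is confirming finiteness of the two degrees before invoking the tower law. The one genuinely substantive point, should it be required for the intended application, is the Galois-stability argument of the previous paragraph, which rests on the canonical characterization of the Hilbert class field together with the normality of $L/K$; this is where I would concentrate the verification, while treating the displayed degree identity as immediate.
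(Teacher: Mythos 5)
Your proof is correct and matches the paper's treatment: the paper states this corollary without proof, implicitly relying on exactly the tower law for $K\subseteq L\subseteq H_L$ that you invoke, and your observation that the Abelian hypothesis is not needed for the bare degree identity (but does give normality of $H_L/K$ and, via Lemma \ref{lem:Hilbertcontain}, the containment $H_K\subseteq H_L$ used downstream) is accurate.
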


\begin{lem}\label{lem:primedegextclassnum}
Let $L/K$ be a Galois extension where $[L:K]=p$ is a prime. Then one of the following two holds:
\begin{enumerate}[(i)]
\item If $L/K$ is ramified at some prime ideal of $\cO_K$, then $h_K\mid h_L.$ 
\item If $L/K$ is unramified, then $p\mid h_K\mid p\cdot h_L.$
\end{enumerate}
\end{lem}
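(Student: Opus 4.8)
The plan is to use class field theory, specifically the behavior of the Hilbert class field under composita and the norm map on ideal class groups. Let me set up the two cases, which correspond to whether or not $L/K$ is ramified.

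\medskip

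\textbf{The plan.} The key structural fact I would exploit is the relationship between the Hilbert class field $H_K$ of the base and the Hilbert class field $H_L$ of the top, together with Corollary \ref{cor:abelian_divis}. By Lemma \ref{lem:Hilbertcontain}, we always have $H_K.L \subseteq H_L$, so $[H_L:L]$ is divisible by $[H_K.L : L]$. The whole argument hinges on computing $[H_K.L:L]$ in terms of $h_K$ and the degree of intersection $H_K \cap L$ over $K$.

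\medskip

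\textbf{Case (ii), unramified.} First I would dispose of the unramified case, since it is the cleaner one. If $L/K$ is unramified of prime degree $p$ and Galois, then $L$ is an unramified abelian extension of $K$, so $L \subseteq H_K$; this immediately gives $p = [L:K] \mid [H_K:K] = h_K$, which is the left divisibility. For the right divisibility, since $L \subseteq H_K$ we have $H_K . L = H_K$, and Corollary \ref{cor:abelian_divis} applied to the abelian extension $L/K$ gives $[H_L : K] = [H_L:L]\cdot[L:K] = h_L \cdot p$. On the other hand, Lemma \ref{lem:Hilbertcontain} gives $H_K \subseteq H_L$, so $h_K = [H_K:K] \mid [H_L : K] = p \cdot h_L$. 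Combining yields $p \mid h_K \mid p \cdot h_L$, as claimed.

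\medskip

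\textbf{Case (i), ramified.} This is the case I expect to be the main obstacle, because $L$ need no longer sit inside $H_K$, and I must show $h_K \mid h_L$ directly. The cleanest route is via the norm map $N_{L/K}$ on ideal class groups: it is a standard fact that $N_{L/K} \circ j$ equals the $[L:K]$-power map on $\mathrm{Cl}(K)$, where $j : \mathrm{Cl}(K) \to \mathrm{Cl}(L)$ is extension of ideals. Since $[L:K]=p$ is prime, this forces the $p$-power map on $\mathrm{Cl}(K)$ to factor through $\mathrm{Cl}(L)$, which controls everything away from $p$ but not the $p$-part. The alternative, and I think better, approach is again through the Hilbert class field: I would show that under ramification the intersection $H_K \cap L = K$, so that $[H_K . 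L : L] = [H_K : K] = h_K$. This follows because $H_K \cap L$ is an intermediate field of the prime-degree extension $L/K$, hence equals $K$ or $L$; but $H_K \cap L \subseteq H_K$ is unramified over $K$, while any intermediate field strictly above $K$ in $L/K$ must be $L$ itself, which is ramified over $K$ by hypothesis — a contradiction unless $H_K \cap L = K$. With the intersection trivial, $[H_K.L:L] = [H_K:K] = h_K$ divides $[H_L:L] = h_L$ by Lemma \ref{lem:Hilbertcontain}, giving $h_K \mid h_L$.

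\medskip

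\textbf{Anticipated difficulty.} The subtle point I would watch carefully is the degree computation $[H_K.L:L] = [H_K:K]$, which requires that $H_K$ and $L$ are linearly disjoint over $K$; this is exactly what the intersection argument $H_K \cap L = K$ secures, but one must confirm that $H_K/K$ being abelian (hence Galois) legitimately lets us pass from triviality of the intersection to multiplicativity of degrees in the compositum. I would make sure to invoke Galois-ness of $H_K/K$ explicitly there, since linear disjointness of the compositum over the intersection needs at least one of the two fields to be Galois over $K$.
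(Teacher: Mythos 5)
Your proposal is correct and follows essentially the same route as the paper: in the ramified case you show $H_K\cap L=K$ (using that $L/K$ has prime degree and $H_K/K$ is unramified) to get $[H_K.L:L]=h_K$ and then apply Lemma \ref{lem:Hilbertcontain}, and in the unramified case you use $L\subseteq H_K\subseteq H_L$ and multiplicativity of degrees in the tower. Your extra care about invoking the Galois property of $H_K/K$ to justify $[H_K.L:L]=[H_K:H_K\cap L]$ is a point the paper leaves implicit, but it is the same argument.
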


\begin{proof}
Note that $\Gal(L/K)\cong \ZZ/p\ZZ$ is Abelian. 

If $L/K$ is ramified, then $K\subseteq H_K\cap L\subsetneq L,$ hence $H_K\cap L=K.$ This implies  $[H_K.L:L] = [H_K:K]=h_K.$ Then by Lemma \ref{lem:Hilbertcontain} 
$$h_K=[H_K.L:L]\mid [H_L:L]=h_L.$$

If $L/K$ is unramified, then $L\subseteq H_K.$ This implies $K\subseteq L \subseteq H_K\subseteq H_L$ hence $$h_K=[H_K:K]=[H_K:L]\cdot [L:K]\mid [H_L:L]\cdot [L:K]=h_L\cdot p.$$
Note that $K\subseteq L \subseteq H_K$ also implies $p=[L:K]\mid [H_K:K]=h_K.$ So we have that $p\mid h_K\mid h_L\cdot p .$
\end{proof}

\begin{cor}
Let $L$ be an imaginary multiquadratic field. Suppose that $K\subset L$ has index 2. If $L/K$ is ramified at any prime $p$, then $h_K\mid h_L$.
\end{cor}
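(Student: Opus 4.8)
The plan is to recognize this corollary as an immediate specialization of Lemma \ref{lem:primedegextclassnum}, case (i). The hypothesis that $K \subset L$ has index $2$ means $[L:K] = 2$, and since $2$ is prime, we are squarely in the setting of the lemma provided we can verify the Galois hypothesis. So the first thing I would do is check that $L/K$ is Galois of prime degree.

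The Galois property is where a small amount of genuine work lives, and I expect it to be the only real obstacle. Here the hypothesis that $L$ is an imaginary multiquadratic field is doing the work: any multiquadratic field is Galois over $\QQ$ with elementary abelian $2$-group Galois group $(\ZZ/2\ZZ)^n$. Since $K \subset L$ with $[L:K] = 2$, the field $K$ corresponds to an index-$2$ subgroup of $\Gal(L/\QQ)$. Because $\Gal(L/\QQ)$ is abelian, every subgroup is normal, so $K/\QQ$ is Galois and, more to the point, $L/K$ is Galois with $\Gal(L/K)$ the corresponding order-$2$ quotient (equivalently, $\Gal(L/K) \cong \ZZ/2\ZZ$ is abelian, hence $L/K$ is automatically Galois as an extension of degree $2$). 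I would state this briefly: a degree-$2$ extension is always normal, and separability is automatic in characteristic $0$, so $L/K$ is Galois regardless — the multiquadratic hypothesis is in fact more than enough.

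With the Galois hypothesis secured, I would invoke Lemma \ref{lem:primedegextclassnum} directly. The corollary assumes $L/K$ is ramified at some prime $p$, which is precisely the hypothesis of case (i) of the lemma (with the prime $p = 2$ in the degree). That case yields $h_K \mid h_L$ verbatim, which is the desired conclusion.

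In short, the proof is a one-line deduction: a degree-$2$ extension $L/K$ is automatically Galois, so Lemma \ref{lem:primedegextclassnum}(i) applies with $p = 2$, and since $L/K$ is ramified at some prime we conclude $h_K \mid h_L$. The main (and only) point requiring a sentence of care is the verification that $L/K$ is Galois; everything else is a citation of the preceding lemma.
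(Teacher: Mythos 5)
Your proof is correct and matches the paper's intent exactly: the paper states this corollary without proof, treating it as an immediate consequence of Lemma \ref{lem:primedegextclassnum}(i) applied to the degree-$2$ (hence automatically Galois) extension $L/K$. Your brief verification of the Galois hypothesis is the right amount of care, and nothing further is needed.
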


\begin{claim}\label{claim:conjugationfixed_imag_equiv}
Suppose that $L$ is an imaginary multiquadratic field, $\mathfrak{p}$ a prime, $\sigma \in \Gal(L/\QQ)$ such that $\sigma$ fixes $\mathfrak{p}$ ($\sigma \in I_\mathfrak{p}$) and $K=L^\sigma.$ Then $K$ is imaginary if and only if $\sigma $ is not complex conjugation. 
\end{claim}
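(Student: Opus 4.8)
The plan is to translate the paper's notion of ``imaginary'' into a purely Galois-theoretic condition and then compare it with the single distinguished involution given by complex conjugation. First I would fix an embedding $L\hookrightarrow\CC$ and recall that, since $L$ is multiquadratic, $L/\QQ$ is abelian with $\Gal(L/\QQ)\cong(\ZZ/2\ZZ)^n$; in particular every element is an involution, so $\sigma^2=\id$, and by the Galois correspondence the fixed field $K=L^\sigma=L^{\langle\sigma\rangle}$ satisfies $\Gal(L/K)=\langle\sigma\rangle=\{\id,\sigma\}$. Complex conjugation restricts to an automorphism $c\in\Gal(L/\QQ)$ whose fixed field is $L\cap\RR$; because $L$ is imaginary, $c\neq\id$.

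Next I would record the key equivalence: a subfield $M\subseteq L$ is imaginary, in the sense that some radicand is negative, if and only if $M\not\subseteq\RR$, if and only if $c$ does not act as the identity on $M$. Applying this with $M=K$, the field $K$ is imaginary exactly when $c|_K\neq\id$, i.e.\ exactly when $c\notin\Gal(L/K)=\langle\sigma\rangle$.

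The heart of the argument is then a one-line membership check inside the elementary abelian $2$-group $\Gal(L/\QQ)$. Since $\langle\sigma\rangle=\{\id,\sigma\}$ and $c\neq\id$, we have $c\in\langle\sigma\rangle$ if and only if $c=\sigma$. Combining this with the previous paragraph yields $K$ imaginary $\iff c\neq\sigma$, which is the claim. I would also note that the hypotheses involving $\mathfrak{p}$ and $\sigma\in I_\mathfrak{p}$ are not used: the conclusion depends only on whether the involution $\sigma$ coincides with complex conjugation, not on any ramification data.

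The point requiring the most care, rather than a genuine obstacle, is the justification that ``imaginary'' is equivalent to ``not fixed pointwise by $c$'' for the multiquadratic subfield $K$. This rests on the dichotomy that a multiquadratic field is either totally real or contains an imaginary quadratic subfield, together with the fact that $K/\QQ$ is Galois (so that $K\subseteq\RR$ is equivalent to $c$ fixing $K$ pointwise). Everything else is formal manipulation of the cyclic subgroup $\langle\sigma\rangle$ and the distinguished element $c$.
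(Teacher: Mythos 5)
Your proof is correct and follows essentially the same route as the paper's: both reduce the question to whether complex conjugation lies in $\langle\sigma\rangle$ inside the elementary abelian $2$-group $\Gal(L/\QQ)$, using that a multiquadratic subfield is imaginary precisely when complex conjugation does not fix it pointwise. Your observation that the inertia hypothesis $\sigma\in I_{\mathfrak{p}}$ plays no role is also consistent with the paper, whose argument likewise never uses it.
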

\begin{proof}
$L$ is imaginary, hence complex conjugation is in $\Gal(L/\QQ)$. Let us denote it by $\gamma .$ Assume $\sigma \neq \gamma .$ Then $\langle \gamma, \sigma \rangle \gneq \langle \sigma \rangle .$ Since $\Gal(L/\QQ)$ is Abelian, both are normal subgroups, hence $K^{\gamma }=L^{\langle \gamma, \sigma \rangle }\subsetneq L^ \sigma =K$ that is, $K$ is imaginary.
If $\sigma =\gamma $ and hence $K=L^{\gamma }$ then $K$ is not imaginary. 
\end{proof}

\begin{lem}\label{lem:atleastdeg8_composite} Suppose that $L$ is an imaginary multiquadratic field with $[L:\QQ]\geq 8$. Then $L$ is the compositum of subfields $K\subset L$ of index two with the following properties:
\begin{enumerate}
\item $L/K$ is ramified at at least one prime,
\item $K$ is imaginary.
\item $h_K$ divides $h_L$.
\end{enumerate}
\end{lem}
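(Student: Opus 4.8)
The plan is to prove that every index-two subfield $K$ of $L$ satisfying properties (1) and (2) automatically satisfies (3), and then to show that the collection of all such $K$ generates $L$ as a compositum. Property (3) is immediate from the results already established: the Corollary following Lemma \ref{lem:primedegextclassnum} states precisely that if $L/K$ has index two and is ramified at some prime, then $h_K \mid h_L$. So properties (1) and (2) are the genuine conditions to verify, and (3) is a free consequence.

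First I would set up the Galois-theoretic dictionary. Since $L$ is multiquadratic, $G = \Gal(L/\QQ) \cong (\ZZ/2\ZZ)^n$ with $n = [L:\QQ]/2 \geq 3$, and index-two subfields $K \subset L$ correspond bijectively to order-two subgroups $\langle \sigma \rangle$ with $\sigma \in G \setminus \{\id\}$, via $K = L^\sigma$. Let $\gamma \in G$ denote complex conjugation, which is nontrivial because $L$ is imaginary. By Claim \ref{claim:conjugationfixed_imag_equiv}, the fixed field $L^\sigma$ is imaginary if and only if $\sigma \neq \gamma$. Thus property (2) simply excludes the single subfield $L^\gamma$ (the maximal real subfield). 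The subfields I want to use as generators are therefore the $L^\sigma$ for $\sigma \in G \setminus \{\id, \gamma\}$ that additionally satisfy the ramification condition (1).

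The two points to address are then: (a) for which $\sigma$ is $L/L^\sigma$ ramified, and (b) do the qualifying $L^\sigma$ compose to all of $L$. For (a), note $L/L^\sigma$ is a quadratic extension; it is unramified at every finite prime only in very restricted circumstances. I would argue that an \emph{imaginary} multiquadratic field cannot contain an everywhere-unramified quadratic subextension of the right type, or more directly handle ramification at infinity/at $2$: the key is that we only need \emph{each} relevant $\sigma$ to give a ramified extension, and for a quadratic extension of number fields, being unramified at all primes is an extremely strong constraint. The cleanest route is probably to observe that $L/L^\sigma$ ramifies at some finite prime for every $\sigma \neq \gamma$ except possibly finitely many exceptional $\sigma$, and then show those exceptions still leave enough generators. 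For (b), the intersection of the corresponding subgroups must be trivial: a compositum $\bigcap_{\sigma \in S} \langle \sigma \rangle^{\perp}$ equals $L$ exactly when $\bigcap_{\sigma \in S} \langle \sigma \rangle = \{\id\}$, i.e. the $\sigma$'s in the excluded set do not contain a common nontrivial element.

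I expect the main obstacle to be the ramification condition (1), specifically controlling which quadratic subextensions $L/L^\sigma$ could be unramified and confirming that even after removing both $\gamma$ and any unramified-producing $\sigma$, the surviving fixed fields still have trivial common intersection and hence compose to $L$. The counting argument for (b) is delicate only because it depends on the outcome of (a): if the set of ``bad'' $\sigma$ (those equal to $\gamma$ or yielding unramified extensions) happened to form a subgroup, the remaining generators might fail to span. The crux is therefore to bound the bad set tightly enough — using $[L:\QQ] \geq 8$, i.e. $n \geq 3$, which gives at least $2^n - 1 \geq 7$ nontrivial elements to work with — so that the good $\sigma$ cannot all lie in a single index-two subgroup and their fixed fields must generate $L$.
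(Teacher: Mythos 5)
Your framework is right --- the Galois dictionary, the observation that property (3) comes for free from the Corollary to Lemma \ref{lem:primedegextclassnum}, and the reduction of property (2) to excluding $\sigma=\gamma$ all match the paper's proof --- but the proposal stops exactly where the proof has to start. You flag as ``the crux'' the task of bounding the set of $\sigma$ for which $L/L^{\sigma}$ is everywhere unramified, and then offer only the expectation that this set is small (``being unramified at all primes is an extremely strong constraint''). That bound is the entire mathematical content of the lemma, and it is missing. The two ideas you need are: first, $L/L^{\sigma}$ is ramified at a prime $\mathfrak{p}$ if and only if $\sigma$ lies in the inertia group $I_{\mathfrak{p}}$, so the ``good'' elements are precisely the elements of $\bigcup_{\mathfrak{p}} I_{\mathfrak{p}}$ other than the identity and $\gamma$; second, Minkowski's theorem ($\QQ$ admits no nontrivial everywhere-unramified extension) forces this union to be large. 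Concretely, if $\bigcup_{\mathfrak{p}} I_{\mathfrak{p}}$ contained at most two nontrivial elements it would generate a subgroup $H$ of order at most $4$, and $L^{H}$ would be an everywhere-unramified extension of $\QQ$ of degree at least $[L:\QQ]/4\geq 2$, a contradiction. Hence the union contains at least three nontrivial elements, so at least two distinct ones different from $\gamma$, and the corresponding two fixed fields already do the job.

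A smaller correction: your closing worry that the good $\sigma$ might ``all lie in a single index-two subgroup'' is not the relevant obstruction. As you yourself note earlier, the compositum of the $L^{\sigma}$ is $L^{\bigcap\langle\sigma\rangle}$, and since each $\langle\sigma\rangle$ has order $2$, this intersection is trivial as soon as two \emph{distinct} good elements exist. So the whole lemma reduces to producing two distinct elements of $\bigcup_{\mathfrak{p}}I_{\mathfrak{p}}\setminus\{\mathrm{id},\gamma\}$, which is exactly what the inertia-plus-Minkowski argument above delivers; no spanning or counting argument beyond that is needed.
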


\begin{proof}
Now suppose $L$ is ramified at $\mathfrak{p}$, and let $\sigma\in\Gal(L/\QQ)$ lie in $I_\mathfrak{p}$. Then $K=L^\sigma$ (the fixed field of $\sigma$) is imaginary if and only if $\sigma$ is not equal to complex conjugation. Suppose that there exist non-trivial elements $\sigma\in I_p$ and $\tau\in I_q$ that are distinct from each other and are not complex conjugation (it might be the case that $\mathfrak{p}=\mathfrak{q}=(2)$). Then $L/L^\sigma$ and $L/L^\tau$ are both ramified at at least one prime and $L^\sigma$ and $L^\tau$ are imaginary. Moreover, the compositum of $L^\sigma$ and $L^\tau$ is $L$. If there do not exist such a pair of primes and elements, then the union of $I_\mathfrak{p}$ for all $\mathfrak{p}\mid\Delta_L$ can only contain at most two non-trivial elements, and hence generates a subgroup of $\Gal(L/\QQ)$ of order at most four. Yet the quotient by this group would then be unramified everywhere and of degree at least $[L:\QQ]/4$, which is impossible if $[L:\QQ]\geq8$.
\end{proof}

Recall the notation $\mathcal{Q}(\mathcal{H})^-$ and $\A{n}{c}$ from section \ref{sect:notation_and_preliminaries}. For a set of fields $\mathcal{H}$ and a set of radicands $\mathcal{Q},$ let $\Extend{\mathcal{H}}{\mathcal{Q}}$ denote the set of all fields $K(\sqrt{r})$ such that $K\in \mathcal{H}$ and $r\in \mathcal{Q}$. 
The following lemma is a corollary of Lemma \ref{lem:atleastdeg8_composite}. 

\begin{lem}\label{lem:highermultiquadstep}
Let $m$ and $n$ be positive integers, $n\geq 3.$ Keeping the above notation, we have 
\begin{equation}\label{eq:highermultiquadstep}
\A{n}{2^{m}}\subseteq \Extend{\A{n-1}{2^{m}}}{\mathcal{Q}^-\left(\A{n-1}{2^{m}}\right)\cap \mathcal{Q}^-\left(\A{1}{2^{m+1}}\right)},
\end{equation}
i.e. every imaginary $n$-quadratic field with class number dividing $2^m$ can be written as an extension $K(\sqrt{r})$ where:
\begin{enumerate}
\item $K$ is an imaginary $(n-1)$-quadratic field with class number $h_K|2^m,$ and 
\item $\QQ(\sqrt{r})$ occurs as a subfield of an imaginary $(n-1)$-quadratic field with class number dividing $2^m,$ and itself has class number $h_r|2^{m+1}.$
\end{enumerate}
\end{lem}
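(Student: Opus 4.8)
The plan is to deduce Lemma \ref{lem:highermultiquadstep} directly from Lemma \ref{lem:atleastdeg8_composite}, which gives us exactly the structural decomposition we need. Let $L \in \A{n}{2^m}$ with $n \geq 3$, so $[L:\QQ] = 2^n \geq 8$ and $h_L \mid 2^m$. By Lemma \ref{lem:atleastdeg8_composite}, $L$ is the compositum of index-two subfields $K \subset L$, each of which is imaginary, has $L/K$ ramified at some prime, and satisfies $h_K \mid h_L$. First I would fix one such $K$: since $K$ is imaginary of degree $2^{n-1}$, it is an imaginary $(n-1)$-quadratic field, and because $h_K \mid h_L \mid 2^m$ we get $K \in \A{n-1}{2^m}$, establishing condition (1). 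Writing $L = K(\sqrt{r})$ for an appropriate squarefree $r$ (possible since $[L:K]=2$), we have $r \in \Q_K$, and since $K$ is imaginary I must argue $r$ can be taken negative so that $\QQ(\sqrt r)$ is an imaginary quadratic subfield of $K$; this follows because $L$ is imaginary and $K \subset L$ is a proper index-two imaginary subfield, so the quadratic extension $L/K$ is generated by a negative radicand occurring in $\Q_K^-$.

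Next I would verify the two conditions on $r$ in the statement. Condition (2) has two parts. That $\QQ(\sqrt r)$ occurs as a subfield of an imaginary $(n-1)$-quadratic field of class number dividing $2^m$ is immediate from the previous paragraph: $\QQ(\sqrt r) \subset K$ and $K \in \A{n-1}{2^m}$. It remains to bound $h_r$. For this I would apply Lemma \ref{lem:primedegextclassnum} to the extension $L/K$, which is Galois of prime degree $p = 2$ and ramified at some prime (by property (1) of Lemma \ref{lem:atleastdeg8_composite}); case (i) then gives $h_K \mid h_L$—already known—but to control $h_r$ itself I instead consider the quadratic extension $K/\QQ(\sqrt r)$ or apply the class number formula. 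The cleaner route is to invoke Corollary \ref{Cor:Thm20Cor}: since $\QQ(\sqrt r)$ is an imaginary quadratic subfield of $K$, and class numbers of imaginary quadratic subfields divide the relevant powers of two by the biquadratic relation, one obtains $h_r \mid 2^{m+1}$. Equivalently, because $h_L \mid 2^m$ and the class number formula (Theorem \ref{classnumberformula}) expresses $h_L$ in terms of the product $P(L)$ of the $h_s$ over all imaginary quadratic subfields $s \in \Q_L^-$, each individual factor $h_r$ must divide $2^{m+1}$.

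The translation into set-theoretic containment is then bookkeeping. Having shown $L = K(\sqrt r)$ with $K \in \A{n-1}{2^m}$ and $r \in \Q_K^-$ with $h_r \mid 2^{m+1}$, I would observe that $r \in \Q^-(\A{n-1}{2^m})$ since $\QQ(\sqrt r) \subset K$, and that $r \in \Q^-(\A{1}{2^{m+1}})$ since $\QQ(\sqrt r)$ is itself an imaginary quadratic field of class number dividing $2^{m+1}$. Thus $r$ lies in the intersection appearing on the right-hand side of \eqref{eq:highermultiquadstep}, and $L \in \Extend{\A{n-1}{2^m}}{\Q^-(\A{n-1}{2^m}) \cap \Q^-(\A{1}{2^{m+1}})}$, as required.

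The main obstacle I anticipate is pinning down the bound $h_r \mid 2^{m+1}$ cleanly. Corollary \ref{Cor:Thm20Cor} is stated only for biquadratic fields, so applying it requires first reducing to a biquadratic subfield of $L$ containing $\QQ(\sqrt r)$—one would select a real quadratic field $k$ inside $K$ and form the biquadratic $\QQ(\sqrt r) \cdot k$, then relate its class number back to $h_L$ via the divisibility chains from Lemma \ref{lem:primedegextclassnum}. Alternatively, and perhaps more robustly, I would argue directly from Theorem \ref{classnumberformula}: since $P(L) = \prod_{s \in \Q_L^-} h_s$ is a divisor-controlled factor of $2^{2^{n-1}-1} h_L / (Q h_{k_3})$ and every $h_s$ is a power of two with $h_r$ one of the factors, the power of two dividing $h_r$ cannot exceed that forced by $h_L \mid 2^m$ together with the normalizing constant, yielding $h_r \mid 2^{m+1}$. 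Making this last inequality precise—rather than hand-waving about "the product"—is the delicate point, and I would want to isolate it as the one genuinely computational lemma underlying the containment.
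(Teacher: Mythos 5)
Your overall strategy---apply Lemma \ref{lem:atleastdeg8_composite} once to obtain $L=K(\sqrt{r})$ with $K\in\A{n-1}{2^m}$---does establish condition (1), but the rest has two genuine gaps. The first: you assert that $\QQ(\sqrt{r})\subset K$ and use this to conclude $r\in\Q^-\left(\A{n-1}{2^{m}}\right)$. That cannot be right: since $L=K(\sqrt{r})$ and $[L:K]=2$, we have $\sqrt{r}\notin K$, so $\QQ(\sqrt{r})$ is a subfield of $L$ but \emph{not} of $K$, and $r\notin\Q_K^-$ (your phrase ``a negative radicand occurring in $\Q_K^-$'' is the same confusion). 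To certify that $\QQ(\sqrt{r})$ sits inside \emph{some} imaginary $(n-1)$-quadratic field of class number dividing $2^m$ you need a second index-two subfield containing it. This is exactly what the paper's proof manufactures: Lemma \ref{lem:atleastdeg8_composite} gives a compositum $L=K_1.L_2$ with \emph{both} factors in $\A{n-1}{2^{m}}$, and iterating the decomposition on $L_2$ produces a chain $L_2\supseteq L_3\supseteq\cdots\supseteq L_{n-1}$ with $L_i\in\A{n+1-i}{2^{m}}$ and $L=K_1.L_i$ at each stage; the radicand $r$ is then chosen with $\QQ(\sqrt{r})\subseteq L_{n-1}\subseteq L_2$, so membership in $\Q^-\left(\A{n-1}{2^{m}}\right)$ is witnessed by $L_2$, not by $K_1$.

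The second gap is the bound $h_r\mid 2^{m+1}$, which you correctly flag as delicate but do not close, and neither of your proposed routes works. Reading it off from Theorem \ref{classnumberformula} fails because that formula only controls the \emph{product} $P(L)$, which may be as large as $2^{2^{n-1}-1}h_L$; an individual factor $h_r$ is a priori nowhere near $2^{m+1}$ from this alone. And Corollary \ref{Cor:Thm20Cor} applies only to a biquadratic field already known to have class number dividing $2^m$; forming an arbitrary biquadratic $\QQ(\sqrt{r})\cdot k$ inside $L$ gives you no control over its class number. The missing ingredient is again the iterated descent: because every $L_i$ in the chain satisfies $h_{L_i}\mid 2^m$, the terminal field $L_{n-1}$ lies in $\A{2}{2^{m}}$, and Theorem \ref{thm:KurodaForImBiquad} applied to $L_{n-1}$ yields $h_{k_1}h_{k_2}h_{k_3}\mid 2^{m+1}$, hence $h_r\mid 2^{m+1}$ for its imaginary quadratic subfield $\QQ(\sqrt{r})$. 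In short, a single application of Lemma \ref{lem:atleastdeg8_composite} is not enough; the recursion down to a biquadratic field with controlled class number is the essential content of the paper's proof and is absent from yours.
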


\begin{proof}
The proof is by induction on $n,$ and relies heavily on Lemma \ref{lem:atleastdeg8_composite}. 

Let $n\geq 3$ and $L\in \A{n}{2^{m}}$. Then $[L:\QQ]=2^n\geq 8,$ hence by Lemma \ref{lem:atleastdeg8_composite} $L$ is a compositum of imaginary subfields of index two, whose class number divides the class number of $L.$ Thus there are imaginary multiquadratic fields $K_1$ and $L_2$ such that $L=K_1.L_2,$ $[L:K_1]=[L:L_2]=2,$ and $h_{K_1},\ h_{L_2}$ divide $h_L.$ This implies that $K_1,\ L_2\in \A{n-1}{2^{m}}.$ 

Now if $n-1\geq 3,$ we can repeat the same argument with $L_2$ in place of $L,$ and find that $L_2=K_2.L_3,$ where $K_2,\ L_3\in \A{n-2}{2^{m}}.$ Since $[L:K_1]=2$ and $K_1.L_2=L$ we may assume that $L_3\not\subseteq K_1,$ and hence we have $L=K_1.L_3,$ where $K_1\in \A{n-1}{2^{m}}$ and $L_3\in \A{n-2}{2^{m}}.$ 

Repeating this argument a few times, we find imaginary multiquadratic fields $L_2,L_3,\ldots ,L_{n-1}$ such that $L_i\in \A{n+1-i}{2^{m}}$ and $L=K_1.L_{i}.$ For any $i\leq n-2$ we have that $[L_i:\QQ]=2^{n+1-i}\geq 2^3=8,$ hence the conditions of Lemma \ref{lem:atleastdeg8_composite} apply. 

Thus we have $L=K_1.L_{n-1}$ where $K_1\in \A{n-1}{2^{m}}$ and $L_{n-1}\in \A{2}{2^m}.$ Then by Theorem \ref{thm:KurodaForImBiquad} we can write $L_{n-1}$ as a compositum of two imaginary quadratic fields with class number dividing $2^{m+1}:$ $L_{n-1}=K_{n-1}.L_n$ where $K_{n-1},\ L_n\in \A{1}{2^{m+1}}.$ We may assume that $L_n\not\subseteq K_1.$ Let $L_n=\QQ(\sqrt{r}).$ Note that $L_n\subset L_2\in \A{n-1}{2^{m}}.$ Thus we have $L=K_1.L_n=K_1(\sqrt{r}),$ where $K_1\in \A{n-1}{2^{m}}$ and 
$r\in \mathcal{Q}^-\left(\A{n-1}{2^{m}}\right)\cap \mathcal{Q}^-\left(\A{1}{2^{m+1}}\right).$ 
\end{proof}

\section{Imaginary Multiquadratic Fields with $n\geq3$}\label{sect:multiquad_algorithm}

Using the tools developed in the last section, the sets $\R_k$ for $k\leq 6$ and the list of fields $\A{2}{32}$ found in Section \ref{sect:biquad_algorithm}, we compute all imaginary multiquadratic fields of class number dividing 32. 

In Section \ref{sect:biquad_algorithm} we used the fact that the class number $h_K$ of a multiquadratic field $K$ is divisible by the product $P(K)$ of the class numbers of all its imaginary quadratic subfields. We once again make use of this statement, concluding the following corollary of Theorem \ref{classnumberformula}:
\begin{cor}\label{cor:Psmall}
Let $K$ be an imaginary $n$-quadratic field whose class number divides $32.$ Then $P(K)|2^{2^{n-1}+4}.$
\end{cor}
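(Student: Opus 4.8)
The plan is to read off the bound directly from the class number formula. I would first dispose of small $n$: for $n=1$ the field $K$ is itself imaginary quadratic, so $P(K)=h_K$ divides $32=2^{2^{0}+4}$ and there is nothing to prove; for $n=2$ I would use Kuroda's formula (Theorem \ref{thm:KurodaForImBiquad}) in place of Theorem \ref{classnumberformula}, noting it is exactly the $n=2$ specialization of the latter. For $n\geq 3$ I would apply Theorem \ref{classnumberformula}, which is legitimate because, as recorded in the remark following that theorem, one can always choose a real $(n-2)$-quadratic subfield $k$ in which at least one odd prime of $K/k$ ramifies.

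Having fixed such a $k$, I would clear the factor $(1/2)^{2^{n-1}-1}$ and rewrite the formula as the integer identity
\[
2^{2^{n-1}-1}\,h_K \;=\; Q\,P\,h_{k_3},
\]
where $P=P(K)$ and $Q,\,h_{k_3}\in\ZZ_{>0}$. From here I would extract the two facts I need. Since $h_K\mid 32$, the left-hand side is a power of $2$, so the product $Q\,P\,h_{k_3}$ of positive integers is a power of $2$; hence each factor, and in particular $P$, is itself a power of $2$. Then, using $Q\geq 1$, $h_{k_3}\geq 1$ and $h_K\leq 2^5$, I would bound
\[
P \;=\; \frac{2^{2^{n-1}-1}\,h_K}{Q\,h_{k_3}} \;\leq\; 2^{2^{n-1}-1}\cdot 2^5 \;=\; 2^{2^{n-1}+4}.
\]
A power of $2$ that is at most $2^{2^{n-1}+4}$ divides $2^{2^{n-1}+4}$, which is the claim.

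The computation is entirely routine; the only points requiring care are justifying that the hypotheses of Theorem \ref{classnumberformula} can be met (the existence of the subfield $k$ with an odd ramified prime, which is where the restriction to $n\geq 3$ versus the separate $n\leq 2$ treatment enters) and observing that $P$ is genuinely a \emph{power} of $2$ rather than merely bounded above. This last observation is the crux: without it the inequality alone would not upgrade to a divisibility statement. Both points are handled as above, so I expect no substantive obstacle beyond this bookkeeping.
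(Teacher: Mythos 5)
Your proof is correct and follows essentially the same route as the paper, which states this as an immediate consequence of Theorem \ref{classnumberformula} (and, for $n=2$, of Theorem \ref{thm:KurodaForImBiquad}): clearing denominators gives $Q\,P\,h_{k_3}=2^{2^{n-1}-1}h_K$, and the bound falls out. The only remark is that your detour through ``$P$ is a power of $2$'' plus an inequality is unnecessary --- since $P$ divides $Q\,P\,h_{k_3}=2^{2^{n-1}-1}h_K$ and $h_K\mid 32$, the divisibility $P\mid 2^{2^{n-1}+4}$ is immediate.
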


The algorithm to find all imaginary multiquadratic fields of class number dividing $32$ is based on Lemma \ref{lem:highermultiquadstep} and Corollary \ref{cor:Psmall}. The process is recursive by $n.$ The sets $\A{1}{32}$ and $\A{2}{32}$ are known from above. By Lemma \ref{lem:highermultiquadstep}, we look for a list of candidates for $\A{n}{32}$ in 
$$\Extend{\A{n-1}{2^{m}}}{\mathcal{Q}^-\left(\A{n-1}{2^{m}}\right)\cap \mathcal{Q}^-\left(\A{1}{2^{m+1}}\right)},$$
the compositions of fields in $\A{n-1}{32}$ with certain imaginary quadratic fields. Finding the class number of a candidate is computationally expensive. Hence we use Corollary \ref{cor:Psmall} to do a preliminary vetting of candidates before their class numbers are computed. 

We first describe the method of vetting a single candidate in \ref{subsect:single_candidate}. Section \ref{subsect:_mq_algorithm} gives an overview of how this fits into the process of determining $\A{n}{32}$ for every $n$ recursively. The explicit algorithms are included in Appendix \ref{sect:algorithms}. Section \ref{subsect:statistics} presents some statistics about the process. 

\subsection{A single candidate}\label{subsect:single_candidate}

We wish to estimate the class number of $K(\sqrt{r})$ where $K\in \A{n-1}{32}$ and $r\in \Q^-(\A{1}{64})\cap \Q^-(\A{n-1}{32})$ is a squarefree, negative rational integer. By Lemma \ref{lem:highermultiquadstep} the set $\A{n}{32}$ consists of fields this form. 

Using the notation above, recall that $K$ is an imaginary $(n-1)$-quadratic field with class number dividing $32,$ while $r$ is a negative squarefree integer such that  $h_r\mid 64,$ and $\QQ(\sqrt{r})\subset L$ for a field $L\in \A{n-1}{32}.$ 

First note that $K\subsetneq K(\sqrt{r})$ if an only if $r\notin \mathcal{Q}_K^-.$ Assume this is the case. Then $r<0$ implies that we have 
\begin{equation}\label{eq:candallnegrads}
\Q_{K(\sqrt{r})}^-=\Q_K^-\cup \{r\} \cup \bigcup_{a\in \Q_{K}^+} \{sf(ra)\}.
\end{equation}
Let us write $S=\bigcup_{a\in \Q_{K}^+} \{sf(ra)\},$ then
\begin{equation}
P(K(\sqrt{r}))=P(K)\cdot h_{r}\cdot \prod_{s\in S}h_{s}.
\end{equation}

By Corollary \ref{cor:Psmall}, we have that $K(\sqrt{r})\notin \A{n}{32}$ unless 
\begin{equation}\label{eq:Ppart_small}
P(K)\cdot h_{r}\cdot \prod_{s\in S} h_{s}|2^{2^{n-1}+4}.
\end{equation}

Recall that the set $\R_i$ is known for $0\leq i\leq 6.$ Since $r\in \Q\left(\A{1}{64}\right)^-,$ $h_r$ is known. Write furthermore $S_i=S\cap \R_i$ for $0\leq i\leq 6$ and $S'=S\setminus \bigcup _{i=0}^6 S_i.$ Then $a\in S'$ implies that $h_a$ is either not a power of $2,$ or at least $2^7.$ We have that if
\begin{equation}\label{eq:P_estimate}
P_{\text{estimate}}:=P(K)\cdot h_{r}\cdot \left(\prod _{i=1}^6 2^{i\cdot |S_i|}\right),
\end{equation}
then
\begin{equation}\label{eq:P_from_estimate}
P(K(\sqrt{r}))=P_{\text{estimate}}\cdot \prod_{a\in S'} h_{a}.
\end{equation}

It follows that $K(\sqrt{r})\notin \A{n}{32}$ unless 
\begin{equation}\label{eq:estimate}
P_{\text{estimate}}\cdot 2^{7\cdot |S'|} \mid 2^{2^{n-1}+4}.
\end{equation}


The explicit algorithm for vetting candidates $K(\sqrt{r})$ from the set $$\Extend{\A{n-1}{2^{m}}}{\mathcal{Q}^-\left(\A{n-1}{2^{m}}\right)\cap \mathcal{Q}^-\left(\A{1}{2^{m+1}}\right)}$$ 
is Algorithm \ref{algorithm:single_multiquad_candidate} in Appendix \ref{sect:algorithms}. We note that the algorithm takes $\Q_K^-$ and $\Q_K^+$ as part of the input. One may compute $\Q_{K(\sqrt{r})}^-$ by \eqref{eq:candallnegrads} and similarly, $\Q_{K(\sqrt{r})}^+$ by: 
\begin{equation}\label{eq:candallposrads}
\mathcal{Q}_{K(\sqrt{r})}^+=\mathcal{Q}_K^+\cup \bigcup_{a\in \mathcal{Q}_{K}^-} \{sf(ra)\}.
\end{equation}

The fact that a candidate passes the vetting process in Algorithm \ref{algorithm:single_multiquad_candidate} does not mean that its class number divides $32.$ (The vetting does not consider the factors $Q$ and $h_3$ in Theorem \ref{classnumberformula}.) However, this vetting reduces the number of candidates significantly. Computing the class numbers that appear in $S'$ for one of the candidates is feasible, whereas computing the class numbers of imaginary quadratic subfields of all candidates before vetting is not. (See section \ref{subsect:statistics} for explicit statistics.)

\subsection{The process for $n\geq 3$}\label{subsect:_mq_algorithm}

We are now ready to describe the method of finding all imaginary $n$-quadratic fields with class number dividing $32.$ This happens recursively on $n$. A single segment of the process determines $\A{n+1}{32}$ from $\A{n}{32}.$ The explicit algorithm of one segment is Algorithm \ref{algorithm:multiquad_segment_n} in Appendix \ref{sect:algorithms}. We summarize the process here. 

Since computations make use of $\Q _K^-,$ $\Q _K^+,$ and $P(K),$ it is useful to store a field $K$ in the form of the tuple $K=(\Q _K^-,\Q _K^+,P(K)).$ 

The first segment (the first iteration of Algorithm \ref{algorithm:multiquad_segment_n}, for $n=3$) takes as input the set of fields $\A{2}{32}.$ This was found according to Section \ref{sect:biquad_algorithm} above. Note that for each biquadratic field $K=\QQ(\sqrt{r_1},\sqrt{r_2})$ we have $\Q _K^-=\{r_1,r_2\}$ and $\Q _K^+=\{sf(r_1r_2)\}.$ 

For each following segment, we begin with the set $\A{n-1}{32}$ of imaginary $(n-1)$-quadratic fields whose class number divides $32.$ 

We first compute $\Q^-\left(\A{n-1}{32}\right)\cap \Q^-\left(\A{1}{64}\right).$ 

By Lemma \ref{lem:highermultiquadstep}, all elements of $\A{n}{32}$ are of the form $K(\sqrt{r})$ where $K\in \A{n-1}{32}$ and $r\in \Q^-\left(\A{n-1}{32}\right).$ To avoid having to compute the class number for every such $K(\sqrt{r})$ we first vet every candidate $K(\sqrt{r})$ using Algorithm \ref{algorithm:single_multiquad_candidate}, i.e. the process described above in \ref{subsect:single_candidate}. This produces two sets, ${\mathtt{vet\_n\_P}}$ of fields $L$ where $P(L)$ has been determined, and a set ${\mathtt{vet\_n\_part\_P}}$ of tuples $(\Q_{L}^-,\Q_{L}^+,S'(L),P_{\text{estimate}}(L))$ of candidates $L$ where $P(L)=P_{\text{estimate}}(L)\cdot \prod_{s\in S'(L)}h_s,$ but the class numbers $h_s$ are not known for any $s\in S'(L).$ 

We collect the radicands with ``missing'' class numbers: ${\mathfrak{S}}_n=\bigcup_{L}S'(L),$ where the union is over fields $L$ represented in the set ${\mathtt{vet\_n\_part\_P}}.$ After determining the class number $h_s$ for every $s\in {\mathfrak{S}}_n,$ we can compute $P(L)$ for every $L$ in ${\mathtt{vet\_n\_part\_P}},$ and check if $L$ satisfies the condition of Corollary \ref{cor:Psmall}. If it does, then the tuple $L=(\Q_{L}^-,\Q_{L}^+,P(L))$ is added to the set ${\mathtt{vet\_n\_P}}.$ 

Finally, we compute the class number of every field on the list ${\mathtt{vet\_n\_P}}$ and select the fields that are in $\A{n}{32}.$

The set ${\mathtt{vet\_n\_P}}$ turns out to be a lot smaller than the initial set of $$|\A{n-1}{32}|\cdot |\Q^-(\A{n-1}{32})\cap \Q^-(\A{1}{64})|$$ candidates. Observe that to perform the vetting process and find ${\mathtt{vet\_n\_P}}$, given $\R_i$ ($i\leq 6$) we only needed to compute the class numbers $h_s$ of imaginary quadratic fields with radicands $s\in {\mathfrak{S}}_n.$ To illustrate the advantage of this approach, we note some statistics about sizes of the relevant sets in each iteration of the algorithm. 

\subsection{Statistics}\label{subsect:statistics}

There are $11207$ imaginary biquadratic fields whose class number divides $32,$ i.e. $|\A{2}{32}|=11207.$ We have $\Q ^-(\A{2}{32})=\bigcup_{k=0}^6\R_k,$ a total of $1485$ radicands. 

Composing the $11207$ elements of $\A{2}{32}$ with the $1485$ radicands would be very large, but after Step \ref{step:subroutine} (Algorithm \ref{algorithm:single_multiquad_candidate}) we find that there are only $|{\mathtt{vet\_3\_P}}|=6537$ triquadratic candidates with $P$ completely computed, and $|{\mathtt{vet\_3\_part\_P}}|=495$ triquadratic candidates with $P$ partially computed. These produce a total of $|{\mathfrak{S}}_3|=440$ negative radicands $s$ such that $h_s$ is unknown. After performing Step  \ref{step:remainingP} for $n=3,$ only $13$ of the $495$ candidates from the list ${\mathtt{vet\_3\_part\_P}}$ are added to ${\mathtt{vet\_3\_P}}.$ Of the $6550$ vetted candidates, in Step \ref{step:computeclnum} we find $17, 27, 48, 146, 280, 484$ fields with class number $2^i,$ $i=0,1,2,3,4,5,$ respectively. These together give $|\A{3}{32}|=1002.$

Next, to determine $\A{4}{32},$ we have $|\Q(\A{3}{32})^-|=251$ radicands in Step \ref{step:collectradstoextend}. Out of all the possible extensions of $1002$ triquadratic fields with $251$ new radicands, we find $|{\mathtt{vet\_4\_P}}|=102$ vetted candidates $L$ with $P(L)$ known and a single candidate $L'$ with an estimate of $P(L')$ computed, and a single radicand $(-110)$ whose class number is missing from the computation of $P(L').$ It turns out that $h_{-110}=12,$ and computing the class number of the remaining $102$ candidates reveals that the number of quadriquadratic fields with class number $2^k$ with $k=0,\ldots ,5$ is $0, 1, 5, 3, 6, 12,$ respectively. Hence we have $|\A{4}{32}|=27.$ 

These quadriquadratic fields have $|\Q(\A{4}{32})^-|=48$ negative radicands between them. None of the candidates built from the $27$ fields and $48$ radicands pass the vetting in Algorithm \ref{algorithm:single_multiquad_candidate}, hence we find $\A{5}{32}=\emptyset .$

\addresseshere

\newpage 

\appendix

\section{Algorithms for multiquadratic fields with $n\geq 3$}\label{sect:algorithms}

\begin{algorithm}[h!]
\caption{Vetting of candidate $K(\sqrt{r})$}\label{algorithm:single_multiquad_candidate}
\begin{flushleft}
{\bf{Precondition:}} We have a field $K,$ a radicand $r,$ radicand lists $\R_i$ for $0\leq i\leq 6,$ as well as sets ${\mathtt{vet\_n\_P}}$ and ${\mathtt{vet\_n\_part\_P}}$ such that
\begin{itemize}
\item $K$ is a field in $\A{n-1}{32},$ given by the tuple $K=(\Q_K^-,\Q_K^+, P(K));$
\item $r\in \Q^-(\A{1}{64})^-\cap \Q^-(\A{n-1}{32})$
\item the sets $\R_i$ for $0\leq i\leq 6$ are radicands of fields in $\A{1}{64}$ as in Section \ref{sect:notation_and_preliminaries}. 
\end{itemize}

{\bf{Postcondition:}} There are three possibilities:
\begin{enumerate}[(1)]
\item the field $K(\sqrt{r})$ is eliminated from consideration
\item the field $K(\sqrt{r})$ is not eliminated, and 
\begin{enumerate}[(a)]
\item $P(K(\sqrt{r}))$ is computed, and $K(\sqrt{r})$ is added to the set of vetted candidates with $P$ determined: ${\mathtt{vet\_n\_P}}$. Here $K(\sqrt{r})$ is recorded as a tuple $K(\sqrt{r})=(\Q_{K(\sqrt{r})}^-,\Q_{K(\sqrt{r})}^+,P(K(\sqrt{r}))).$
\item $P_\text{estimate}$ is computed so that $P(K(\sqrt{r}))=P_{\text{estimate}}\cdot \prod_{a\in S'} h_{a}.$ In this case $K(\sqrt{r})$ is added to the set ${\mathtt{vet\_n\_part\_P}}$ of candidates with $P$ partially computed. Elements of this set are tuples $(\Q_{K(\sqrt{r})}^-,\Q_{K(\sqrt{r})}^+,S',P_\text{estimate}).$
\end{enumerate}
\end{enumerate}
\end{flushleft}
\begin{algorithmic}[1]
\If{$h_r\mid 2^{2^{n-1}+4}\cdot P(K)^{-1}\And r\notin \Q_K^-$ }
\State $S\gets \bigcup_{a\in \Q_{K}^+} \{sf(ra)\}$ 
\State $S_i\gets S\cap \R_i$ for $0\leq i\leq 6$ 
\State $S'\gets S\setminus \bigcup _{i=0}^6 S_i$ 
\State $P_{\text{estimate}}\gets P(K)\cdot h_{r}\cdot \left(\prod _{i=1}^6 2^{i\cdot |S_i|}\right)$ \Comment{\eqref{eq:P_estimate}}
\If{$\log_2(P_{\text{estimate}})+7\cdot |S'| \leq 2^{n-1}+4$} \Comment{\eqref{eq:estimate}}
\If{$S'=\emptyset $} 
\State $P(K(\sqrt{r}))\gets P_{\text{estimate}},$
\State $\Q_{K(\sqrt{r})}^-\gets \Q_K^-\cup \{r\} \cup S$ \Comment{\eqref{eq:candallnegrads}}
\State $\Q_{K(\sqrt{r})}^+\gets \Q_K^+\cup \bigcup _{a\in \Q_{K}^-} \{sf(ra)\}$
\State $K(\sqrt{r})=(\Q_{K(\sqrt{r})}^-,\Q_{K(\sqrt{r})}^+,P(K(\sqrt{r})))$
\State ${\mathtt{vet\_n\_P}}\gets {\mathtt{vet\_n\_P}}\cup \{K(\sqrt{r})\}$ 

\Else \Comment{$S'\neq \emptyset $} 
\State $\Q_{K(\sqrt{r})}^-\gets \Q_K^-\cup \{r\} \cup S$ \Comment{\eqref{eq:candallnegrads}}
\State $\Q_{K(\sqrt{r})}^+\gets \Q_K^+\cup \bigcup _{a\in \Q_{K}^-} \{sf(ra)\}$
\State $S'(K(\sqrt{r}))\gets S'$
\State $P_{\text{estimate}}(K(\sqrt{r}))\gets P_{\text{estimate}}$
\State $K(\sqrt{r})=(\Q_{K(\sqrt{r})}^-,\Q_{K(\sqrt{r})}^+,S'(K(\sqrt{r})),P_{\text{estimate}}(K(\sqrt{r}))) $
\State ${\mathtt{vet\_n\_part\_P}}\gets {\mathtt{vet\_n\_part\_P}}\cup \{K(\sqrt{r})\}$
\EndIf
\EndIf
\EndIf
\end{algorithmic}
\end{algorithm}

\begin{algorithm}[h!]
\caption{Determining $\A{n}{32}$ from $\A{n-1}{32}$}\label{algorithm:multiquad_segment_n}
\begin{flushleft}
{\bf{Precondition:}} The set $\A{n-1}{32}$ with each field $K$ given as a tuple $(\Q_K^-, \Q_K^+, P(K));$ the sets $\R_i$ for $0\leq i\leq 6.$

{\bf{Postcondition:}} A set of fields ${\mathtt{n\_quad\_cands}}$ where each field $L$ in the set is given by a tuple $(\Q_L^-, \Q_L^+, P(L));$ and $L$ is an $n$-quadratic field satisfying the condidtions of Corollary \ref{cor:Psmall}.
\end{flushleft}
\begin{algorithmic}[1]
\item ${\mathtt{radicands}}\gets \{\}$
\For{$K\in \A{n-1}{32}$} \label{step:collectradstoextend}
\State ${\mathtt{radicands}}\gets {\mathtt{radicands}}\cup \Q_K^-$ \Comment{${\mathtt{radicands}}=\Q^-(\A{n-1}{32})$}
\EndFor
\For{$i\gets0$ \textbf{to} 6}
\State $\R_i^{(n)}\gets \R_i\cap \Q^-(\A{n-1}{32})$
\EndFor
\State ${\mathtt{vet\_n\_P}}\gets \{\}$
\State ${\mathtt{vet\_n\_part\_P}}\gets \{\}$
\For{$K\in \A{n-1}{32}$}
\State ${\mathtt{bound}}\gets 2^{n-1}+4-\log_2(P(K))$
\State ${\mathtt{radicands\_for\_K}}\gets \bigcup _{i=1}^{{\mathtt{bound}}}\R_i^{(n)}$
\For{$r\in {\mathtt{radicands\_for\_K}}$} \label{step:subroutine}
\State Perform Algorithm \ref{algorithm:single_multiquad_candidate} with $K$ and $r$ 
\EndFor
\EndFor
\State {${\mathfrak{S}}_n\gets \{\}$}
\For{$L\in {\mathtt{vet\_n\_part\_P}}$}
\State {${\mathfrak{S}}_n\gets {\mathfrak{S}}_n\cup S'(L)$}
\Comment{${\mathfrak{S}}_n$ is the set of radicands $s$ with unknown $h_s$}
\EndFor
\State ${\mathtt{missing\_q\_clnums}}\gets\{\}$
\For{$s\in {\mathfrak{S}}_n$} \Comment{Compute the missing class numbers $h_s$}
\State ${\mathtt{missing\_q\_clnums}}\gets {\mathtt{missing\_q\_clnums}} \cup \{(s,h_s)\}$
\EndFor
\For{$L\in {\mathtt{vet\_n\_part\_P}}$}
\State ${\mathtt{new\_P\_estimate}}\gets P_{\text{estimate}}(L)$
\For{$s\in S'(L)$} \Comment{Use ${\mathtt{missing\_q\_clnums}}$ to compute $P(L)$} \label{step:remainingP}
\State ${\mathtt{new\_P\_estimate}}\gets {\mathtt{new\_P\_estimate}}\cdot h_s$
\EndFor
\State $P(L)\gets {\mathtt{new\_P\_estimate}}$
\If{$\log_2(P(L))\leq 2^{n-1}+4$}
\State ${\mathtt{vet\_n\_P}}\gets {\mathtt{vet\_n\_P}}\cup (\Q_{L}^-,\Q_{L}^+,P(L))$ 
\EndIf
\EndFor
\State $\A{n}{32}=\{\}$
\For{$K\in {\mathtt{vet\_n\_P}}$} \Comment{Compute the class number of the surviving candidates.}
\If{$h_K|32$}
\State $\A{n}{32}\gets \A{n}{32}\cup \{K\}$ \label{step:computeclnum}
\EndIf
\EndFor
\end{algorithmic}
\end{algorithm}

\end{document}